\title[ ]{The Contact Geometry of the Spatial Circular Restricted 3-body Problem}
\author{WanKi Cho, Hyojin Jung and GeonWoo Kim}
\newtheorem{theorem}{Theorem}[section]
\newtheorem{lemma}[theorem]{Lemma}
\newtheorem{corollary}[theorem]{Corollary}
\newtheorem{remark}[theorem]{Remark}
\newtheorem{definition}{Definition}[section]
\newcommand{\R}{\mathbb R}
\def\charf {\mbox{{\text 1}\kern-.30em {\text l}}}
\begin{document}

\begin{abstract}
We show that a hypersurface of the regularized, spatial circular restricted three-body problem is of contact type whenever the energy level is below the first critical value (the energy level of the first Lagrange point) or if the energy level is slightly above it. A dynamical consequence is that there is no blue sky catastrophe in this energy range.
\end{abstract}

\maketitle \centerline{\date}


\section{Introduction}
In \cite{AFvKP}, it was proved that the regularized, planar, circular restricted three-body problem is of contact type for energies below and also slightly above the first critical value. Such a result is significant, because it enables the use of holomorphic curve techniques in this classical problem. These techniques have been of great importance in the understanding of the dynamics of Hamiltonian dynamical systems; we mention the work of Floer on the Arnold conjecture, of Hofer and Taubes on the Weinstein conjecture and of Hofer-Wysocki-Zehnder on global surfaces of section. For some of the original works and their improvements using holomorphic curves see \cite{Floer,Hofer:Weinstein,HWZ:GSS,HS:GSS}.
The purpose of this paper is to prove that the spatial, circular restricted three-body problem also has the contact property, hence enabling Floer theoretic techniques to be used in this problem.
As a direct dynamical application, we prove that the spatial, circular restricted three-body problem does not undergo so-called blue sky catastrophes, i.e.,~bifurcations where the period and length of a smooth family of periodic orbits blow up, in the energy range where the contact property holds.

In the restricted three-body problem (RTBP from now on) we consider two massive primaries, which we will refer to as the Earth and Moon, respectively, and a massless satellite that interact with each other via Newtonian gravity.
The Earth and Moon obey Keplerian two-body dynamics, which are well-understood: solutions are described by conic sections. The satellite is influenced by both the Earth and Moon, and this results in more complicated dynamics that are known to be chaotic for almost all mass ratios of the Earth and Moon.
In the circular RTBP, we make the further assumption that the Earth and Moon rotate around each other in circular orbits.
This results in an extra integral, the so-called Jacobi integral, which we will describe now.
Consider the circular RTBP in a frame that is uniformly rotating with constant speed such that the Earth and Moon are fixed on the $x$-axis. Denote the mass of the Earth and Moon by $m_E$ and $m_M$, respectively. Define the mass ratio as $\mu=\frac{m_M}{m_E+m_M}$.
We will write $E=(\mu,0,0)$ and $M=(-1+\mu,0,0)$ for the position of the Earth and Moon in the rotating frame.
The transformation to this rotating frame is achieved by the Hamiltonian vector field coming from the angular momentum function. 
Since the cotangent bundle is trivial in this case, we make identification $T^*(\R^3 \setminus \{ E, M\})= (\R^3 \setminus \{ E, M\}) \times \R^3 $. The position coordinates will be denoted by $q$ and the momentum coordinates in the fiber by $p$.
With this notation in mind, the Hamiltonian that describes the motion of the satellite in $\R^3 \setminus \{ E, M\}$ is given by 
\begin{equation}
\label{abccccc}
\begin{split}
H: (\R^3 \setminus \{ E, M\}) \times \R^3 &\longrightarrow \R,\\
(q,p) &\longmapsto \dfrac{1}{2} \mathopen| p\mathclose|^2 - \dfrac{\mu}{\mathopen| q - M \mathclose|} - \dfrac{1-\mu}{\mathopen|q - E\mathclose|} + p_1 q_2 - p_2q_1.\\
\end{split}
\end{equation}
A derivation of this Hamiltonian can be found in \cite{1} and in \cite{8}.

The energy hypersurface $H^{-1}(c)$ consists of several components depending on the energy level.
Since it will be important to select the correct component, we will introduce some notation. 
Define a projection $\pi$ as
\begin{equation}\label{1bc}
\pi : (\mathbb{R}^3 \setminus \{E, M\}) \times \mathbb{R}^3 \rightarrow \mathbb{R}^3 \setminus \{E,M\}
.
\end{equation}
Level sets of $H$ will be denoted by
\begin{equation} 
\Sigma_c := H^{-1}(c) \qquad \text{for} \quad c \in \mathbb{R}.
\end{equation}
The \textit{Hill's region} is defined as the projection
\begin{equation}
\mathcal{K}_c = \pi(\Sigma_c) \in \mathbb{R}^3 \setminus \{E,M\}.
\end{equation}

For energies $c$ that are lower than the smallest critical value $H(L_1)$ of $H$, both the energy hypersurface $\Sigma_c$ and its projection $\mathcal{K}_c$ consist of three connected components.
Two of the connected components of $\mathcal{K}_c$ are bounded and the other is unbounded. 
The closures of the two bounded regions contain the Moon and Earth, respectively.  We will denote these two components by $\mathcal{K}_c^M$ and $\mathcal{K}_c^E$ such that
\[
M \in \bar{\mathcal{K}}_c^M, \qquad E \in \bar{\mathcal{K}}_c^E
.
\]
Accordingly, we define $\Sigma^M_c$ and $\Sigma^E_c$ as
\[
\Sigma_c^M = \pi^{-1}(\mathcal{K}_c^M) \cap \Sigma_c, \quad \Sigma_c^E = \pi^{-1}(\mathcal{K}_c^E) \cap \Sigma_c
.
\]
For energy $c$ slightly above the first critical value, but below the second smallest critical value, the projection $\mathcal{K}_c$ consists of two connected components. 
Since one of these components is bounded and its closure
contains the Earth and Moon, we refer to this component as  $\mathcal{K}_c^{M,E}$ and define $\Sigma^{M,E}_c$ as
$$
\Sigma_c^{M,E} = \pi^{-1}(\mathcal{K}_c^{M,E}) \cap \Sigma_c
.
$$
Note that $\Sigma_c^{M}$, $\Sigma_c^{E}$ and $\Sigma_c^{M,E}$ are non-compact.
This non-compactness comes from collisions of the satellite with the Moon or Earth. 
These collisions occur whenever $q\to E$ or $q\to M$, and correspond to singular points of the Hamiltonian. To remove these singularities, we will use the regularization introduced by Moser, \cite{8}. We will provide more details when we need them in Section~\ref{sec:moser_reg}.
The upshot is that we can define regularized hypersurfaces, which we denote by $\overline{\Sigma}^M_c$, $\overline{\Sigma}^E_c$ and $\overline{\Sigma}^{M,E}_c$.
In the first two cases, so if $c$ is smaller than $H(L_1)$, the hypersurfaces $\overline{\Sigma}^M_c$ and $\overline{\Sigma}^E_c$, are both diffeomorphic to $ST^*S^3 \cong S^2 \times S^3$.
For $c$ slightly above the first critical value, the regularized component $\overline{\Sigma}^{M,E}_c$ is diffeomorphic to $ST^*S^3 \# ST^*S^3$.

We now state our result.
\begin{restatable}{theorem}{firstthm}
\label{thm:firstthm}
Fix a mass ratio $\mu \in (0,1)$.
Then for $c<H(L_1)$, both regularized energy hypersurfaces $\overline{\Sigma}^E_c$ and $\overline{\Sigma}^M_c$ are of contact type.
Furthermore there exists $\epsilon>0$ such that the regularized energy hypersurface $\overline{\Sigma}^{E,M}_c$ is also of contact type for $H(L_1)<c<H(L_1)+\epsilon$.
\end{restatable}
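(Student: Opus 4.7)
The plan is to adapt the strategy of \cite{AFvKP} from the planar to the spatial setting: construct a contact form on each regularized hypersurface piecewise, working separately on the non-collision region and in a neighborhood of each primary, then glue. I would focus first on $\overline{\Sigma}^E_c$ with $c<H(L_1)$; the construction for $\overline{\Sigma}^M_c$ is symmetric by interchanging $E$ and $M$, and the case $c$ slightly above $H(L_1)$ will follow by a continuity argument at the end.

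On the non-collision part of $\Sigma^E_c$, a direct computation with the standard Liouville form $\lambda_0 = p\cdot dq$ gives
\[
\lambda_0(X_H) \;=\; \tfrac12|p|^2 + c + \tfrac{\mu}{|q-M|} + \tfrac{1-\mu}{|q-E|},
\]
which vanishes on the zero-velocity surface $\pi^{-1}(\partial \mathcal{K}^E_c) \cap \Sigma^E_c$ and can even turn negative elsewhere on $\Sigma^E_c$. I would therefore correct $\lambda_0$ by an exact 1-form $d\phi$ supported near the locus where $\lambda_0(X_H)\le 0$, chosen so that $(\lambda_0 + d\phi)(X_H) > 0$ throughout the non-collision part of $\Sigma^E_c$. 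The assumption $c<H(L_1)$ is what makes such a $\phi$ available: it ensures that the closure of $\mathcal{K}^E_c$ contains no critical point of the effective potential $U(q) = -\tfrac12(q_1^2+q_2^2) - \tfrac{\mu}{|q-M|} - \tfrac{1-\mu}{|q-E|}$, giving quantitative control. To handle the collision at $q=E$, apply the Moser regularization recalled in Section~\ref{sec:moser_reg}: a stereographic compactification in the momentum variable converts the Kepler part of $H$ to the geodesic flow on the round $S^3$, identifies the regularized level with $ST^*S^3$, and the canonical 1-form of $T^*S^3$ restricts to a contact form there. For the full RTBP Hamiltonian, the regularized flow is a perturbation of this model, and the contact condition persists provided the perturbation is small, which is the case when the Hill's region is tightly confined near $E$. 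A smooth interpolation
\[
\lambda_t \;=\; (1-\chi)(\lambda_0 + d\phi) + \chi\,\lambda_{\mathrm{Moser}}
\]
between the two forms then produces the desired global contact form on $\overline{\Sigma}^E_c$, provided $\lambda_t(X_H) > 0$ throughout the overlap region.

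For $c$ slightly above $H(L_1)$, the two Hill components merge into $\mathcal{K}_c^{M,E}$ through a neck at $L_1$; the local constructions near $E$ and $M$ are unaffected, and away from the neck the correction $\phi$ extends by openness of the contact condition in $c$. In a neighborhood of $L_1$, the Hessian of $U$ has explicit signature, so one checks that the correction constructed for $c<H(L_1)$ continues through $L_1$ for small $c-H(L_1)>0$. The main technical obstacle I anticipate is the gluing step: in contrast to the planar case, the regularized hypersurfaces are $5$-dimensional and the Moser regularization uses stereographic projection on $S^3$ rather than $S^2$, so the explicit matching between $\lambda_0 + d\phi$ and $\lambda_{\mathrm{Moser}}$ is noticeably more involved. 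Verifying that $\lambda_t(X_H) > 0$ throughout the interpolation region should reduce to a single Jacobi-style inequality, and establishing this inequality uniformly in $c$ up to $H(L_1)+\epsilon$ is, I expect, the heart of the argument.
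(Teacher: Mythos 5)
Your outline reproduces the paper's three-part architecture (transversality away from collisions, Moser regularization near the primaries, a separate construction at $L_1$), but each of the steps that actually carries the proof is missing or incorrect. First, the central object: the paper does not start from $\lambda_0=p\cdot dq$ and correct it by $d\phi$. It uses the position-radial Liouville vector field $X=(q-M)\cdot\partial_q$ (equivalently the primitive $-(q-M)\cdot dp$), and the entire content of Section~\ref{sec:transversality} is the explicit inequality $\rho\,\partial U/\partial\rho>\rho\sin\varphi\sqrt{2(c-U)}$ on the Moon component, obtained from a spherical-coordinate analysis of the effective potential: $U$ restricted to each sphere is minimized at $\theta=0$, $\varphi=\pi/2$; one shows $\partial U/\partial\rho>0$ and $\partial^2U/\partial\rho^2\le-\sin^2\varphi$ on the Hill ball and integrates inward from the zero-velocity surface. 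Your claim that $c<H(L_1)$ ``makes such a $\phi$ available'' because the Hill region contains no critical point is not an argument: the existence of a primitive of $\omega$ restricting to a contact form is essentially the statement being proved, so as written you have restated the theorem rather than proved it. A decisive advantage of the paper's choice is that the Moser switch map carries $(q-M)\cdot\partial_q$ exactly to $\eta\cdot\partial_\eta$ on $T^*S^3$, so no interpolation between a ``non-collision'' form and a ``Moser'' form is needed at all; a single global Liouville field works, and only the estimate $X(Q)\ge\mu^2-2\mu\epsilon(1+(1-\mu)C)>0$ near the collision locus remains.

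Second, your gluing formula $\lambda_t=(1-\chi)(\lambda_0+d\phi)+\chi\,\lambda_{\mathrm{Moser}}$ is not a primitive of $\omega$: one has $d\lambda_t=\omega+d\chi\wedge(\lambda_{\mathrm{Moser}}-\lambda_0-d\phi)$, which differs from $\omega$ wherever $d\chi\neq0$, so the contact-type condition is violated in the overlap. The correct gluing (Section~\ref{sec:connected_sum}) writes the difference of the two primitives as $dG$ and interpolates via $\alpha_0+d(fG)$, which remains a primitive of $\omega$, at the cost of having to estimate the extra term $G\,dH(Z_f)$. Third, the case $H(L_1)<c<H(L_1)+\epsilon$ is not an openness-in-$c$ statement: the hypersurface changes topology through a singular level, and the radial field $(q-M)\cdot\partial_q$ fails to be transverse near $L_1$, so no ``continuation through $L_1$'' of the below-critical construction is possible. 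The paper instead invokes Conley's quadratic normal form at $L_1$ to build a new linear Liouville field $Y_{a,b,\gamma}$ (with $a<0$, $b>0$, $0<\gamma<1$) transverse in a ball around $L_1$, and then performs the $dG$-gluing across the neck $\{q_1+\tfrac{1}{\rho}p_2=0\}$. Without these three ingredients the proposal does not close.
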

By abuse of inner product symbol, we use the following notation, $p\cdot\partial_p := \Sigma_i p_i \dfrac{\partial}{\partial p_i}$.
We point out that the Liouville vector field $p\cdot\partial_p$ is not transverse to these level sets due to the magnetic term. Instead, the Liouville vector field that we use to prove this theorem, is inspired by the Moser regularization.
The paper consists of three parts: 
\begin{enumerate}
\item transversality of the Liouville vector field
 for the unregularized problem away from the critical points. This is done in Section~\ref{sec:transversality}.
\item construction of a Liouville vector field near the critical points and gluing this vector field to the Liouville vector field from Section~\ref{sec:transversality}. This is done in Section~\ref{sec:connected_sum}.
\item transversality of the Liouville vector field on the regularization. This is done in Section~\ref{sec:moser_reg}.
\end{enumerate}

Our result has a dynamical corollary. To state this corollary, recall that a blue sky catastrophe is a bifurcation in the sense of \cite{3} where both the period and length blow up.
\begin{corollary}
For $\mu \in (0,1)$, there is no blue sky catastrophe for the Hamiltonian vector field on $\overline{\Sigma}^E_c$, $\overline{\Sigma}^M_c$ for $c<H(L_1)$ and $\overline{\Sigma}^{E,M}_c$ for $H(L_1)<c<H(L_1)+\epsilon$.
\end{corollary}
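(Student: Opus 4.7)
The plan is to deduce the corollary from Theorem~\ref{thm:firstthm} by reducing the Hamiltonian dynamics on each regularized hypersurface to a Reeb flow on a compact contact manifold, and then arguing, as in \cite{AFvKP}, that such flows admit no blue sky catastrophe.

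First, I would make the Reeb reformulation precise. Theorem~\ref{thm:firstthm} supplies, for each relevant compact hypersurface $\overline{\Sigma}$, a Liouville vector field $Y$ transverse to $\overline{\Sigma}$. The associated contact form $\alpha = \iota_Y\omega|_{\overline{\Sigma}}$ has Reeb vector field $R_\alpha$ satisfying
\[
X_H|_{\overline{\Sigma}} = dH(Y)\cdot R_\alpha.
\]
By transversality, $dH(Y)$ is nowhere zero on $\overline{\Sigma}$, and by compactness it is bounded above and below by positive constants. Consequently the periodic orbits of $X_H$ and $R_\alpha$ coincide as subsets of $\overline{\Sigma}$, and the period and length of any smooth family $\gamma_s$ of $X_H$-orbits are comparable, up to uniform constants, to the Reeb period $T^R_s = \int_{\gamma_s}\alpha$. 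It therefore suffices to rule out blow-up of $T^R_s$ in a smooth family of closed Reeb orbits.

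Next, I would exclude a Reeb-type blue sky catastrophe. Suppose for contradiction that a smooth family $\gamma_s$, $s\in[0,s_0)$, of closed Reeb orbits has $T^R_s \to \infty$ as $s\to s_0$. At each interior value the implicit function theorem applied to the Poincar\'e return map on a local transverse slice extends the family smoothly with continuously varying period, so the blow-up must localize at $s_0$ via accumulation of the image curves on some compact $R_\alpha$-invariant subset $\Lambda \subset \overline{\Sigma}$. Since $R_\alpha$ vanishes nowhere, $\Lambda$ contains no equilibria, which eliminates the classical homoclinic-to-fixed-point mechanism. The remaining possibility --- accumulation on an aperiodic recurrent invariant set --- is the main obstacle, and it is handled exactly as in \cite{AFvKP}: SFT-type compactness for closed Reeb orbits on the convex boundary of a Liouville filling forces any sequential limit to be a broken Reeb orbit of finite total action, contradicting $T^R_s \to \infty$. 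Both ingredients, compactness of $\overline{\Sigma}$ and the contact-type condition, are precisely the outputs of Theorem~\ref{thm:firstthm}, so the planar argument transposes to the spatial setting without essential modification.
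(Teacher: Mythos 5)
Your first paragraph is consistent with the paper's setup: the Hamiltonian flow on a compact contact-type hypersurface is a positive reparametrization of the Reeb flow, so it suffices to control the Reeb periods. The paper does exactly this reduction at the start of the proof of Theorem~\ref{thm:no_blue-sky}.

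The second paragraph, however, contains a genuine gap at the decisive step. There is no ``SFT-type compactness for closed Reeb orbits'' that forces a sequence of closed Reeb orbits with $T^R_s\to\infty$ to converge to a broken orbit of finite total action; SFT compactness is a statement about pseudoholomorphic curves, not about one-parameter families of orbits, and on a general compact contact manifold nothing prevents a sequence of closed Reeb orbits from having unbounded periods (multiple covers already do this). The exclusion of blow-up in \cite{AFvKP}, and in this paper, rests on a completely different and elementary mechanism: the period of a closed Reeb orbit is its Rabinowitz action, $\tau_r=\mathcal{A}^{H_r}(\gamma_r,\tau_r)=\int_{S^1}\gamma_r^*\lambda$, and because $(\gamma_r,\tau_r)$ is a critical point of $\mathcal{A}^{H_r}$, the derivative of $\tau_r$ in the family parameter only sees the explicit parameter-dependence of the Hamiltonian,
\begin{equation*}
\partial_r\tau_r=-\tau_r\int_{S^1}(\partial_rH_r)(\gamma_r)\,dt,
\end{equation*}
which by compactness of the level sets gives $|\partial_r\tau_r|\le k\,\tau_r$ and hence the Gronwall bound $\tau_{r_2}\le e^{k(r_2-r_1)}\tau_{r_1}$; the period therefore stays bounded (indeed converges) as $r\to 1$. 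Note also that your appeal to the implicit function theorem for the Poincar\'e return map implicitly assumes nondegeneracy of the orbits, which is neither assumed nor needed here. To repair the proof, replace the compactness argument by the action--period identity and the first-variation/Gronwall estimate above (after normalizing, as the paper does, so that the Reeb field equals the Hamiltonian field on each level set).
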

We will prove this Corollary in Section~\ref{sec:blue_sky}.

\subsection*{Acknowledgements}
During this project, W.C. and G.K. were supported by NRF grant NRF-2016R1C1B2007662. H.J. was supported by the National Research Foundation of Korea(NRF) Grant funded by the Korean Government(MSIT) (No.2017R1A5A1015626)

\section{Construction of restricted spatial three-body problem}
We briefly review the spatial circular RTBP. 
We consider two primaries, which we will call the Earth and Moon, and a massless satellite.
The Earth and Moon will be denoted by $E$ and $M$. We will assume that they have mass $m_E>0$ and $m_M>0$, respectively.
Define the mass ratio as
\[
\mu = \dfrac{m_M}{m_E + m_M} \in (0,1).
\]
The Earth and Moon follow Keplerian dynamics, but we will make the additional assumption that they orbit each other in a circular motion.
After a coordinate transformation and using suitable units, we can arrange that
\[
E(t) = (\mu\cos{t}, \mu\sin{t}, 0), \qquad M(t) = (-(1-\mu)\cos{t}, -(1-\mu)\sin{t}, 0).
\]
The time-dependent Hamiltonian describing the motion of the satellite is the function $H_t : (\mathbb{R}^3 \setminus \{ M(t), E(t) \}) \times \mathbb{R}^3 \rightarrow \mathbb{R}, $ given by
\begin{equation}
H_t(q,p) = \dfrac{1}{2} \mathopen| p\mathclose|^2 - \dfrac{\mu}{\mathopen| q - M(t) \mathclose|} - \dfrac{1-\mu}{\mathopen|q - E(t)\mathclose|}. \nonumber
\end{equation}
We change to a so-called synodical coordinate system, i.e., ~a rotating coordinate system fixing the Earth and Moon to the real axis, so
\[
E = ( \mu, 0, 0), \ \ M = (-(1-\mu), 0, 0).
\]
In this new coordinate system, we obtain an autonomous Hamiltonian $H : (\mathbb{R} ^3 \setminus\{ M, E \}) \times \mathbb{R}^3 \rightarrow \mathbb{R}$ given by
\begin{equation}
H(q,p) = \dfrac{1}{2} \mathopen| p\mathclose|^2 - \dfrac{\mu}{\mathopen| q - M \mathclose|} - \dfrac{1-\mu}{\mathopen|q - E\mathclose|} + p_1 q_2 - p_2q_1. \nonumber
\end{equation}
The term $p_1q_2 -p_2 q_1$, which is minus the angular momentum, is due to the time-dependent coordinate change.
We translate the Moon to the origin and find a new Hamiltonian
\begin{equation}\label{ham}
H(q,p) = \dfrac{1}{2} \mathopen| p\mathclose|^2 - \dfrac{\mu}{\mathopen| q \mathclose|} - \dfrac{1-\mu}{\mathopen|q-e\mathclose|} + p_1 q_2 - p_2(q_1-1+\mu)
\end{equation}
where $e=(1,0,0)$. By completing the squares we get
\begin{equation}\label{divide}
\begin{split}
H(q, p) &= \dfrac{1}{2}((p_1+q_2)^2 + (p_2 - q_1 + 1 -\mu)^2 + p_3^2)\\
 & \ \ \ \ \ \ \ \ \ \ \ \ \ \ \  - \dfrac{\mu}{\mathopen| q \mathclose|} - \dfrac{1-\mu}{\mathopen| q - e \mathclose|} - \dfrac{1}{2}((q_1-1+\mu)^2 + q_2^2).
\end{split}
\end{equation}
This motivates the definition of the effective potential $U$ as
\begin{equation} \label{x}
U (q) = -\dfrac{\mu}{\mathopen| q \mathclose|} - \dfrac{1 - \mu}{\mathopen| q - e \mathclose|} - \dfrac{1}{2}((q_1 - 1 + \mu)^2 + q_2^2).
\end{equation}
The effective potential has five critical points $l^1, l^2, l^3, l^4, l^5$ which are called \textit{Lagrange points}. We order these Lagrange points by their critical values of $U$. 
There are three critical points on the axis between the Earth and Moon. The critical point $l^1$ is located between the Earth and Moon and attains the smallest critical value of $U$.
Using the projection $\pi$ given by \eqref{1bc}, we find a one to one correspondence between critical points of the effective potential and of the Hamiltonian \eqref{divide}.
\begin{equation}
\begin{split}
\pi^{-1} : crit(U) &\rightarrow crit(H) \\
(q_1, q_2, 0) &\mapsto (q_1,q_2,0,-q_2,q_1-1+\mu,0)
\end{split}
\end{equation}
The critical points of the Hamiltonian will be denoted by 
$$L_i := (l^i_{1},l^i_{2},0,-l^i_{2},l^i_{1}-1+\mu,0) \in \mathbb{R}^6\quad \text{for} \ \ i \in \{1,2,3,4,5\}.$$ Denote the values of the Hamiltonian at critical points by 
$$c_i := H(L_i)\quad \text{for }i \in \{1,2,3,4,5\}.$$

In the whole paper we will consider energy surface $H^{-1}(c)$, where $c$ is at most slightly larger than $H(L_1)$. This critical value is at most $-\frac{3}{2}$, so we will assume in particular that $c < -1$.

\section{Below the first critical level in spatial case}
We will use the following form of spherical coordinates in our analysis,
\begin{gather}
q_1 = \rho\cos{\theta}\sin{\varphi} \nonumber \\
q_2 = \rho\sin{\theta}\sin{\varphi} \nonumber \\
q_3 = \rho\cos{\varphi} \nonumber \\
0 \leq \theta < 2\pi, \ 0 \leq \varphi \leq \pi \nonumber.
\end{gather}
Since we will start with energy levels below the first critical value, we can and will assume that the radius $\rho$ is smaller than the distance from the Moon to the first Lagrange point\footnote{The radius of the Moon-component of the Hill's region is bounded by the distance of the Moon to $\ell^1$.}, which is always less than 1.
In these coordinates, the effective potential is
\begin{equation}
U(\rho, \varphi, \theta) = -\dfrac{\mu}{\rho} - \dfrac{1 - \mu}{\sqrt{\rho^2 - 2\rho\cos{\theta}\sin{\varphi}+1}} - \dfrac{1}{2}( (\rho\cos{\theta}\sin{\varphi} - 1 + \mu)^2 +\rho^2\sin^2{\theta}\sin^2{\varphi}).
\end{equation}

For fixed $\rho$, we will find the minimal value of $U$ on this sphere,
which we do by differentiating $U$ with respect to $\theta$.

\begin{lemma}
\label{lemma:minval_pot}
For fixed $\rho$ and $\varphi$, the effective potential $U$ has a minimal value on $\theta = 0$ or $\theta = \pi$.
\end{lemma}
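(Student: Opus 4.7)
The plan is to reduce everything to a clean factorization of $\partial_\theta U$ and then argue by elementary sign analysis.

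First I observe that $U$ depends on $\theta$ only through $\cos\theta$ and $\sin^2\theta$, so $U(\rho,\varphi,\theta) = U(\rho,\varphi,-\theta)$. It therefore suffices to locate the minimum on the closed interval $[0,\pi]$.

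Next I differentiate term by term. Introduce the abbreviation $f(\theta) := \rho^2 - 2\rho\cos\theta\sin\varphi + 1$ (the square of the satellite-to-Earth distance, up to a translation). The Moon-Keplerian term contributes
\begin{equation}
\partial_\theta\!\left(-\frac{1-\mu}{\sqrt{f}}\right) = \frac{(1-\mu)\rho\sin\theta\sin\varphi}{f(\theta)^{3/2}}.
\end{equation}
For the centrifugal block $-\tfrac12\bigl((\rho\cos\theta\sin\varphi - 1+\mu)^2 + \rho^2\sin^2\theta\sin^2\varphi\bigr)$ a brief expansion using $\cos^2\theta + \sin^2\theta = 1$ shows that the $\rho^2\sin^2\varphi$ cross-terms combine into a $\theta$-independent constant, leaving only the linear piece $-2(1-\mu)\rho\cos\theta\sin\varphi$. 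Its $\theta$-derivative is thus $-(1-\mu)\rho\sin\theta\sin\varphi$. Summing the two contributions yields the clean factorization
\begin{equation}
\partial_\theta U \;=\; (1-\mu)\,\rho\sin\theta\sin\varphi\left(\frac{1}{f(\theta)^{3/2}} - 1\right).
\end{equation}

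The sign analysis is then immediate. If $\sin\varphi = 0$ the potential is $\theta$-independent and the claim is trivial. Otherwise, on $(0,\pi)$ the prefactor $(1-\mu)\rho\sin\theta\sin\varphi$ is strictly positive, so $\partial_\theta U$ has the sign of $1-f$. Because $\cos\theta$ is strictly decreasing on $[0,\pi]$ and $\sin\varphi>0$, the function $f$ is strictly increasing in $\theta$; hence $1-f$ changes sign at most once on $(0,\pi)$, and only from $+$ to $-$. Any interior critical point is therefore a strict local maximum, and the minimum on $[0,\pi]$ must be attained at one of the endpoints $\theta=0$ or $\theta=\pi$.

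The only non-routine step is the algebraic cancellation yielding the factorization above; the sign argument is otherwise elementary, so I expect no serious obstacle.
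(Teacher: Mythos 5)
Your proof is correct. The factorization
\begin{equation}
\partial_\theta U = (1-\mu)\rho\sin\theta\sin\varphi\left(\frac{1}{f(\theta)^{3/2}}-1\right),
\qquad f(\theta)=\rho^2-2\rho\cos\theta\sin\varphi+1,
\end{equation}
is exactly the paper's starting point, but from there you diverge. The paper enumerates all critical points on each circle (solving $\cos\theta=\rho/(2\sin\varphi)$), computes $\partial^2 U/\partial\theta^2$ at $\theta=0$ and $\theta=\pi$, and runs a three-way case analysis on the sign of $\rho^2-2\rho\sin\varphi$, invoking compactness of $S^1$ to handle the degenerate case $\rho=2\sin\varphi$ where the second-derivative test fails at $\theta=0$. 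You instead observe that $f$ is strictly increasing on $[0,\pi]$ (when $\sin\varphi>0$), so the bracketed factor is strictly decreasing and $\partial_\theta U$ changes sign at most once on $(0,\pi)$, and only from $+$ to $-$; combined with the evenness $U(\theta)=U(-\theta)$ this forces the minimum to an endpoint. Your route is shorter, avoids second derivatives entirely, and treats the degenerate case uniformly rather than by a separate compactness argument; what you lose relative to the paper is the explicit classification of all critical points on each circle, which the paper does not actually need for the lemma but which documents the full critical-point structure. The only cosmetic quibble is the sentence identifying the derivative of the centrifugal block: the linear piece inside the bracket is $-2(1-\mu)\rho\cos\theta\sin\varphi$, and the stated derivative $-(1-\mu)\rho\sin\theta\sin\varphi$ is that of the piece after the overall factor $-\tfrac12$ has been applied; the net result you use is correct.
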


\begin{proof} The first derivative of $U$ with respect to $\theta$ is given by
$$\dfrac{\partial U}{\partial \theta} =(1-\mu)\rho\sin\theta\sin\varphi \left( \dfrac{1}{(\rho^2-2\rho\cos\theta\sin\varphi+1)^{3/2}}-1\right) .$$

The solutions of $\frac{\partial U}{\partial \theta}=0$ satisfy $$\sin\theta = 0,   \ \sin\varphi=0, \text{\ or }\ \frac{1}{(\rho^2-2\rho\cos\theta\sin\varphi+1)^{3/2}}-1=0 \ .$$
Define $A=\rho^2-2\rho\cos\theta\sin\varphi$ and observe that
\begin{equation}
\begin{split}
\dfrac{1}{(\rho^2-2\rho\cos\theta\sin\varphi+1)^{3/2}}-1 &= \dfrac{1}{(A+1)^{3/2}}-1 \\
&= \dfrac{1-(A+1)^3}{(A+1)^{3/2}(1+(A+1)^{3/2})} \\
&= \dfrac{-A^3-3A^2-3A}{(A+1)^{3/2}(1+(A+1)^{3/2})} \\
&= \dfrac{-A(A^2+3A+3)}{(A+1)^{3/2}(1+(A+1)^{3/2})}. \nonumber
\end{split}
\end{equation}
Since we know that $A^2+3A+3 \neq 0$, we see that $A$ must vanish for 
$$ \frac{1}{(\rho^2-2\rho\cos\theta\sin\varphi+1)^{3/2}}-1=0.$$ In other words,
$$\rho(\rho-2\cos\theta\sin\varphi)=0 \Longleftrightarrow  \rho-2\cos\theta\sin\varphi = 0. $$

Therefore the solutions of $\frac{\partial U}{\partial \theta}=0$ are as follows:
\begin{equation}\label{ss}
\rho-2\cos\theta\sin\varphi=0 ,
\end{equation}
\begin{equation}\label{bs}
\theta = 0, \ \pi ,
\end{equation}
\begin{equation}\label{ab}
 \varphi = 0,\ \pi.
\end{equation}

By the way, we will consider the case \eqref{ab} later, which represents the North Pole and South Pole. Note that if  $\varphi \neq 0,\ \pi$, then Equation \eqref{ss} can be rewritten as
\begin{equation}\label{asx}
\cos\theta =\dfrac{\rho}{2\sin\varphi}.
\end{equation}

We split Equation \eqref{asx} into two cases, namely,
\begin{equation}\label{asx1}
 \dfrac{\rho}{2\sin\varphi}=1, \ \ 0<  \dfrac{\rho}{2\sin\varphi}<1.
\end{equation}
In the first case of (\ref{asx1}), $\theta=0$ is a solution of Equation \eqref{asx} which we will consider later. In the second case of (\ref{asx1}), Equation \eqref{asx} has two solutions for $\theta$ which are neither $0$ nor $\pi$. 

We will focus on critical points on subsets with fixed $\varphi$. Each such subset is a circle which is parallel to the plane $\{ \varphi = \pi/2 \}$. 
Now we have to classify critical points on each circle. To do this, it is enough to check the cases $\theta = 0$ and $\theta = \pi$ from \eqref{bs}. Consider the second derivative with respect to $\theta$:
\begin{equation} 
\label{eq:2nd_der}
\begin{split}
\dfrac{\partial^2 U}{\partial \theta^2}
&
=(1-\mu)\rho\sin\varphi \left( \cos\theta\left( \dfrac{1}{(\rho^2-2\rho\cos\theta\sin\varphi+1)^{3/2}}-1\right) \right.
\\
&\phantom{=}
\left.
+\sin\theta \left( -\dfrac{3\rho\sin\theta\sin\varphi}{(\rho^2-2\rho\cos\theta\sin\varphi+1)^{5/2}}\right)\right) 
.
\end{split}
\nonumber
\end{equation}

Note that the second derivative of the effective potential $U$ with respect to $\theta$ is positive at $\theta=\pi$ except $\varphi = 0, \pi$, i.e., $\frac{\partial^2 U}{\partial \theta^2}(\rho,\varphi,\pi) >0$ for all $\rho$ and $\varphi \neq 0, \pi$. 
We compute this as follows:

\begin{equation} \label{normal}
\dfrac{\partial^2 U}{\partial \theta^2}(\rho,\varphi,\pi)=-(1-\mu)\rho\sin\varphi\left( \dfrac{1}{(\rho^2+2\rho\sin\varphi+1)^{3/2}}-1\right) \nonumber.
\end{equation}

Since it is clear that $\rho^2+2\rho\sin\varphi+1>1$, we have $\frac{1}{(\rho^2+2\rho\sin\varphi+1)^{3/2}}-1 <0$. Therefore we get $\frac{\partial^2 U}{\partial \theta^2}(\rho,\varphi,\pi)>0$ for any fixed $\rho$ and $\varphi$. In other words, at $\theta = \pi$ the effective potential $U$ has a local minimum on a circle.\\

We now consider the case $\theta=0$:
\begin{equation} \label{123}
\dfrac{\partial^2 U}{\partial \theta^2}(\rho,\varphi,0)=(1-\mu)\rho\sin\varphi\left( \dfrac{1}{(\rho^2-2\rho\sin\varphi+1)^{3/2}}-1\right).
\end{equation}

Observe that the sign of the second derivative at $\theta = 0$ depends on the sign of $\rho^2-2\rho\sin\varphi$.
First, if $\rho^2 - 2\rho\sin\varphi > 0$, then $\frac{\rho}{2\sin\varphi}>1$ which means there is no real solution of Equation \eqref{asx}. Thus we get $\frac{1}{(\rho^2-2\rho\sin\varphi+1)^{3/2}}-1 < 0$ and the function $U$ has a local minimum at $\theta = 0$ on each circle.
Second, if $\rho^2 - 2\rho\sin\varphi < 0$, then $\frac{\rho}{2\sin\varphi}<1$ which is the second case in \eqref{asx1}. In this case, Equation \eqref{asx} has two additional solutions in $\theta$ which are neither $0$ nor $\pi$. 
Hence there are four critical points and the function $U$ has a local minimum at $\theta=0$ on each circle. Third, if $\rho^2 - 2\rho\sin\varphi = 0$, i.e., $\frac{\rho}{2\sin\varphi}=1$, we know that $\theta =0$ is a solution of Equation \eqref{asx}. 
Note that there are two critical points $\theta=0,\ \pi$ in this case and we have already shown that the function $U$ has a local minimum at $\theta = \pi$ for any $\rho$ and $\varphi$. Since the domain is a circle which is compact without boundary, the function $U$ must have a maximum at $\theta=0$. \hspace*{\fill}

\end{proof}

To summarize, for fixed $\rho$ and $\varphi$, the function $U$ has a minimum at $\theta = 0$ or at $\theta=\pi$. Hence we restrict our domain to the great circle, i.e., $\theta =0$ or $\pi$ and automatically we include the North pole and South pole, which is the case \eqref{ab}. To find a global minimum on the great circle, we differentiate $U$ with respect to $\varphi$.

\begin{theorem}\label{thmU}
For fixed $\rho \in (0,1)$, the effective potential $U$ attains its minimum at $\varphi = \dfrac{\pi}{2},\ \theta = 0$.
\end{theorem}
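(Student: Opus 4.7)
The strategy is to use Lemma~\ref{lemma:minval_pot} to reduce the minimization from the sphere of radius $\rho$ to a one-dimensional great circle, then exploit a symmetry that turns the problem into concave minimization of a single real variable.

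First, Lemma~\ref{lemma:minval_pot} tells us that for fixed $\rho$ and $\varphi$ the minimum of $U$ over $\theta$ is attained at $\theta = 0$ or $\theta = \pi$. Running $\varphi$ over $[0,\pi]$ on each of these two meridians traces out exactly the great circle
\[
\mathcal{C}_\rho = \{ (x, 0, z) \in \R^3 : x^2 + z^2 = \rho^2 \}
\]
in the $q_1$-$q_3$ plane, so the global minimum of $U$ on the sphere of radius $\rho$ must be attained on $\mathcal{C}_\rho$. On $\mathcal{C}_\rho$ one has $q_2 = 0$, $|q| = \rho$, and $|q - e|^2 = (x-1)^2 + z^2 = \rho^2 - 2x + 1$; in particular, $U$ restricted to $\mathcal{C}_\rho$ depends only on $x \in [-\rho,\rho]$, giving the reduced potential
\[
\tilde{U}(x) = -\frac{\mu}{\rho} - \frac{1-\mu}{\sqrt{\rho^2 - 2x + 1}} - \frac{1}{2}(x - 1 + \mu)^2.
\]

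A direct computation yields $\tilde{U}''(x) = -1 - 3(1-\mu)(\rho^2 - 2x + 1)^{-5/2} < 0$, so $\tilde{U}$ is strictly concave on the compact interval $[-\rho,\rho]$ and therefore attains its minimum at one of the two endpoints. A short algebraic comparison then yields
\[
\tilde{U}(\rho) - \tilde{U}(-\rho) = -\frac{2\rho^3(1-\mu)}{1 - \rho^2} < 0,
\]
using $\rho < 1$ from the standing assumption that we are in the Moon-component of the Hill region below the first critical level. Hence the minimum is attained at $x = \rho$, which in the spherical coordinates of the statement corresponds to $(\varphi,\theta) = (\pi/2, 0)$.

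The only conceptual step is the reduction via Lemma~\ref{lemma:minval_pot} combined with the observation that on $\mathcal{C}_\rho$ the potential depends only on the $q_1$-coordinate; once this is in place, strict concavity of the one-variable restriction plus the single endpoint comparison settle the matter, so I do not expect any serious obstacle.
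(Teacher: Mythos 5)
Your proof is correct, and the second half takes a genuinely cleaner route than the paper's. The reduction to the great circle $\{q_2=0\}$ via Lemma~\ref{lemma:minval_pot} is the same in both arguments, and your reduced potential $\tilde U(x)$ agrees with the paper's restriction $U|_{(\rho\cos\phi,0,\rho\sin\phi)}$ under $x=\rho\cos\phi$. From there the paper differentiates in the angle $\phi$, counts exactly four critical points on the circle, classifies $\phi=0,\pi$ as local minima, and then invokes Lemma~5.2 of \cite{AFvKP} to decide which of the two local minima is global. You instead observe that on the circle $U$ factors through the coordinate $x=q_1\in[-\rho,\rho]$, that $\tilde U''(x)=-1-3(1-\mu)(\rho^2-2x+1)^{-5/2}<0$, and hence that the minimum sits at an endpoint; the explicit computation $\tilde U(\rho)-\tilde U(-\rho)=-2\rho^3(1-\mu)/(1-\rho^2)<0$ (which I have checked) then finishes the job. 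This avoids both the critical-point count and the external citation, at the cost of nothing: concavity in $x$ is consistent with the paper's finding that $\phi=0,\pi$ are local minima in the angular variable, since $dx/d\phi$ vanishes there. The only points worth making explicit in a write-up are (i) that each interior value of $x$ corresponds to two points of the circle but $U$ takes the same value at both, so minimizing over the circle is the same as minimizing $\tilde U$ over $[-\rho,\rho]$, and (ii) that the simplification $\sqrt{\rho^2-2\rho+1}=1-\rho$ uses the standing assumption $\rho<1$; you note both, so there is no gap.
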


\begin{proof} 
Because of Lemma~\ref{lemma:minval_pot} we know that for a fixed $\rho$, the effective potential $U$ has a minimum at $\theta = 0$ or $\theta=\pi$. For convenience, we use the standard coordinates and parametrize the great circle as $(\rho\cos \phi,0,\rho\sin\phi)$ where $\phi \in [0,2\pi)$. Now we focus on the restriction of $U$ to the great circle; the restriction of $U$ is given by
$$U|_{(\rho\cos\phi,0,\rho\sin\phi)}= -\dfrac{\mu}{\rho}-\dfrac{1-\mu}{(\rho^2-2\rho\cos\phi+1)^{1/2}}-\dfrac{1}{2}(\rho\cos\phi-1+\mu)^2.$$
The derivative of $U$ with respect to $\phi$ is given by

\begin{equation} \label{2ndfac}
\begin{split}
\dfrac{\partial U}{\partial \phi} \Big|_{(\rho\cos\phi,0,\rho\sin\phi)}&=\dfrac{(1-\mu)\rho\sin\phi}{(\rho^2-2\rho\cos\phi+1)^{3/2}}+(\rho\cos\phi-1+\mu)\rho\sin\phi\\
&=\rho\sin\phi \Bigg(\dfrac{1-\mu}{(\rho^2-2\rho\cos\phi+1)^{3/2}} +\rho\cos\phi-1+\mu\Bigg). 
\end{split}
\end{equation}

It is clear $\phi = 0$ and $\phi = \pi$ are solutions of $\dfrac{\partial U}{\partial \phi} \Big|_{(\rho\cos\phi,0,\rho\sin\phi)} = 0$. To find other critical points, we use the substitution $ t = \cos\phi$ where $0 \leq \phi   \leq \pi$ for the second factor of $\frac{\partial U}{\partial \phi} |_{(\rho\cos\phi,0,\rho\sin\phi)}$. Denote
$$f(t)=\dfrac{1-\mu}{(\rho^2-2\rho t +1)^{3/2}} +\rho t-1+\mu.$$
\\
Observe that
$$f(-1)=\dfrac{1-\mu}{(\rho^2 +2\rho+1)^{3/2}}-\rho -(1-\mu)<0 , $$
$$f(1)=\dfrac{1-\mu}{(\rho^2 -2\rho +1)^{3/2}}+\rho -(1-\mu)>0 ,$$
and $f'(t)$ is computed as follows:
$$f'(t)=\dfrac{3\rho(1-\mu)}{(\rho^2-2\rho t+1)^{5/2}}+\rho >0 .$$
This observation tells us there is exactly one solution of $f(t)$ on the interval $(-1,1)$ and hence the zero of the second factor of Equation (\ref{2ndfac}) is neither $\phi = 0$ nor $\phi = \pi$. Similarly, for $ \pi \leq \phi \leq 2\pi$ we get a zero of the second factor of Equation (\ref{2ndfac}) which is neither $0$ nor $\pi$. Thus $\frac{\partial U}{\partial \phi}$ has four distinct solutions.\\

\indent Next we classify given four critical points. Consider the second derivative of the function $U$ with respect to $\phi$:
\begin{equation}
\begin{split}
\dfrac{\partial^2 U}{\partial \phi ^2} &=\rho\cos\phi \Bigg(\dfrac{1-\mu}{(\rho^2-2\rho\cos\phi+1)^{3/2}} +\rho\cos\phi-1+\mu\Bigg)\\
 &\ \ \ \ \ \ \ \ \ \ \ \ \ \ \ \ \ \ + \rho\sin\phi \  \ \dfrac{\partial}{\partial \phi}\Bigg(\dfrac{1-\mu}{(\rho^2-2\rho\cos\phi+1)^{3/2}} +\rho\cos\phi-1+\mu\Bigg). \nonumber
\end{split}
\end{equation} 
 
The function $U$ has a local minimum on $\phi = 0, \ \pi$ since
$$\dfrac{\partial^2 U}{\partial \phi ^2} \Big|_{\phi=0}=\rho \Bigg(\dfrac{1-\mu}{(\rho^2-2\rho+1)^{3/2}} +\rho-1+\mu\Bigg)>0$$
and
$$\dfrac{\partial^2 U}{\partial \phi ^2} \Big|_{\phi=\pi}=-\rho \Bigg(\dfrac{1-\mu}{(\rho^2+2\rho+1)^{3/2}} -\rho-1+\mu\Bigg)>0.$$
It also has a local maximum on the other critical points because of compactness of $S^1$. Using the Lemma~5.2. of \cite{AFvKP}, the function $U$ attains the global minimum at $\phi = 0$. In spherical coordinates it can be written as
$$\varphi =\dfrac{\pi}{2},\ \theta = 0.$$
\hspace*{\fill}
\end{proof}

\section{Transversality}
\label{sec:transversality}
In this section we will prove a part of the main theorem.
Namely, we will define a Liouville vector field that is transverse to the level set of the Moon component  $\Sigma^M_c$ of the unregularized Jacobi Hamiltonian.
In Section~\ref{sec:moser_reg}, we will complete the proof by showing that this Liouville vector field is also transverse to the regularized set.
\newline

The first step is to compute the derivative of $H$ in \eqref{ham}; the  exterior derivative of $H$ is given by
\begin{equation}\label{dH}
\begin{split}
dH &= p\cdot dp + \dfrac{\mu }{\mathopen| q \mathclose|^3}q\cdot dq + \dfrac{1-\mu}{\mathopen| q-(1,0,0)  \mathclose|^3}(q-(1,0,0) )\cdot dq \\
&\phantom{=}
+ p_1 dq_2 + q_2 dp_1 -p_2dq_1 -(q_1-1+\mu)dp_2.
\end{split}
\end{equation}

If we put the Moon at the origin, the Liouville vector field is given by
\begin{equation} \nonumber
X = q\cdot \dfrac{\partial}{\partial q}.
\end{equation}

Insert the Liouville vector field into (\ref{dH}), and then
\begin{equation}
dH(X) = X(H) = \dfrac{\mu}{\mathopen| q \mathclose|} + (1-\mu)\dfrac{q \cdot (q-(1,0,0))}{\mathopen| q-(1,0,0)\mathclose|^3} + p_1q_2 - p_2q_1.
\end{equation}

\begin{theorem}
\label{part_mainthm}
The Liouville vector field $X$ intersects ${\Sigma^M_c}$ transversely, i.e, $X(H)|_{\Sigma^M_c} $ is positive for $c < H(L_1).$
\end{theorem}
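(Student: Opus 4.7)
The plan is to reduce the theorem to a position-space inequality by minimizing $X(H)$ fiberwise over $p$ on the constraint $\{H=c\}$, and then to verify that inequality with the aid of Theorem~\ref{thmU} and the planar analysis of \cite{AFvKP}. The computation carried out just above Theorem~\ref{part_mainthm} yields
\[
X(H) = \frac{\mu}{|q|} + (1-\mu)\,\frac{q \cdot (q - e)}{|q-e|^3} + (p_1 q_2 - p_2 q_1).
\]
Let $A(q)$ denote the first two (positive) summands; the angular-momentum term $p_1 q_2 - p_2 q_1$ can have either sign, so the task is to bound it from below on $\{H=c\}$.

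To do this, fix $q$ in the interior of $\mathcal{K}_c^M$. The completed-square identity \eqref{divide} exhibits the fiber $\{p : H(q,p)=c\}$ as a round $2$-sphere in $p$-space, of radius $\sqrt{2(c-U(q))}$ centered at $(-q_2,\, q_1-1+\mu,\, 0)$. Since $p_1 q_2 - p_2 q_1$ is linear in $p$ with gradient $(q_2,-q_1,0)$, its minimum on this sphere equals its value at the center minus the radius times the norm of the gradient:
\[
\min_p\, (p_1q_2 - p_2 q_1) = -(q_1^2 + q_2^2) + (1-\mu)q_1 - \sqrt{2(c-U(q))(q_1^2+q_2^2)}.
\]
Combining this with the identity $A(q) - (q_1^2+q_2^2) + (1-\mu)q_1 = q\cdot\nabla U(q) = \rho\,\partial_\rho U$, valid in the Moon-centered spherical coordinates of the previous section, the transversality assertion reduces to the position-space inequality
\[
\rho\,\partial_\rho U(q) \;>\; \sqrt{2(c-U(q))}\,\rho\sin\varphi \qquad\text{for every } q \in \mathcal{K}_c^M.
\]

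To verify this I argue as follows. Since $\sin\varphi \leq 1$, it suffices to establish $\partial_\rho U > \sqrt{2(c-U)}$ on $\mathcal{K}_c^M$. By Theorem~\ref{thmU}, $U$ achieves its minimum on each sphere $\{|q|=\rho\}$ at the equatorial point $\theta=0,\,\varphi=\pi/2$, which is also where $\rho\sin\varphi=\sqrt{q_1^2+q_2^2}$ is maximal; hence the most restrictive case lies on the ray from the Moon toward $L_1$, and along that ray the inequality coincides with the planar transversality estimate already proved in \cite{AFvKP}.

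The main obstacle is extending this reduction from the critical planar ray to all of $\mathcal{K}_c^M$. Theorem~\ref{thmU} controls the right-hand side via $c - U(q) \leq c - U(\rho,0,\pi/2)$, but it says nothing directly about $\partial_\rho U$ along other rays from the Moon. I expect this to be handled by direct estimates: $\partial_\rho U$ contains the dominant positive singularity $\mu/\rho^2$, while the Newtonian and centrifugal corrections are uniformly bounded on $\mathcal{K}_c^M$ since $\rho < \rho_{L_1} < 1$. These uniform bounds, together with the planar estimate on the critical ray, should secure the inequality throughout the Moon component of the Hill region and so complete the proof of $X(H)|_{\Sigma_c^M} > 0$.
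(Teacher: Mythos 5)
Your reduction of the theorem to the position--space inequality
\[
\rho\,\partial_\rho U \;>\; \rho\sin\varphi\,\sqrt{2(c-U)} \qquad \text{on } \mathcal{K}^M_c
\]
is correct and coincides with the paper's Equation~\eqref{lastpos} (the paper reaches it by Cauchy--Schwarz on the shifted momentum rather than by explicitly minimizing the linear function $p_1q_2-p_2q_1$ over the fiber sphere, but the two computations are equivalent). The gap is in the verification of this inequality, which is the actual content of the proof. Your plan --- drop the factor $\sin\varphi$, use Theorem~\ref{thmU} to push the worst case onto the planar ray $\theta=0,\ \varphi=\pi/2$, and absorb everything else into ``the dominant singularity $\mu/\rho^2$'' --- does not go through. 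Theorem~\ref{thmU} locates the minimum of $U$ (hence the maximum of the right-hand side) on each sphere $\{|q|=\rho\}$, but it gives no control over where $\partial_\rho U$ is \emph{smallest} on that sphere; since the two extremizers need not coincide, the pointwise comparison to the planar ray is not a valid reduction. The domination argument also fails exactly where it is most needed: $\partial_\rho U$ vanishes at $l^1$ (a critical point of $U$), so for $\rho$ near $d=|M-l^1|$ the term $\mu/\rho^2$ is cancelled by the remaining contributions and no uniform lower bound of the kind you describe exists. Finally, discarding $\sin\varphi$ commits you to a strictly stronger inequality, which the paper's own machinery (whose second-derivative bound degenerates as $\sin^2\varphi\to 0$ near the poles) is not designed to deliver.

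What is missing is the mechanism the paper uses to prove \eqref{lastpos}: (i) Lemma~\ref{posU1}, $\partial_\rho U>0$ on $B\setminus\{M,l^1\}$, obtained via the substitution $\cos\theta'=\cos\theta\sin\varphi$ and the planar Lemma~5.4 of \cite{AFvKP}; (ii) Lemma~\ref{negU2}, the concavity estimate $\partial^2_\rho U\le -\sin^2\varphi$; and (iii) the integration step: since $U(d,\varphi,\theta)\ge U(d,\tfrac{\pi}{2},0)>c$ (this is where Theorem~\ref{thmU} actually enters), every radial ray from $q\in\mathcal{K}^M_c$ meets $\{U=c\}$ at some $\rho+\tau$ with $\tau<d-\rho$, and integrating $\tfrac{d}{dt}\bigl(\partial_\rho U\bigr)^2=2\,\partial_\rho U\,\partial^2_\rho U\le -2\sin^2\varphi\,\partial_\rho U$ from $0$ to $\tau$ yields $(\partial_\rho U)^2>2\sin^2\varphi\,(c-U)$, which is precisely \eqref{lastpos}. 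Without an argument of this type --- or some other quantitative lower bound on $\partial_\rho U$ valid on all of $\mathcal{K}^M_c$, not just on the planar ray --- the proof is incomplete.
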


The strategy to prove this theorem is differentiating the effective potential $U$ with respect to $\rho$:
\begin{equation}\label{partialbyrho}
\begin{split}
\dfrac{\partial U}{\partial \rho} &= \dfrac{\mu}{\rho^2} + \dfrac{(1-\mu)(\rho-\cos\theta\sin\varphi)}{(\rho^2-2\rho\cos\theta\sin\varphi+ 1)^{3/2}}\\ 
& \ \ \  - \rho\cos^2\theta\sin^2\varphi + \cos\theta\sin\varphi(1-\mu) - \rho\sin^2\theta\sin^2\varphi.
\end{split}
\end{equation}

Let $d := \mathopen| M - l^1 \mathclose|$, where $M$ is the position of the Moon and $l^1$ is the first Lagrange point and define the ball $B$ as
\begin{equation} \nonumber
B = \{ q \in \mathbb{R}^3 : |q-M| \leq d\}.
\end{equation}
In this points, we need the following lemmas.

\begin{lemma}\label{posU1}
The derivative of U with respect to $\rho$ is positive if  $q \in B \setminus \{M , l^1 \}.$
\end{lemma}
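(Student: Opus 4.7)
The plan is to reduce the claim to a one-dimensional computation along the Moon-Earth segment, where $l^1$ is by definition the radial equilibrium of $U$. I would split it into two sub-claims: (a) on each sphere $\{|q-M|=\rho\}$ with $\rho\in(0,d]$, $\partial U/\partial\rho$ is minimized at the Earth-facing point $\theta=0$, $\varphi=\pi/2$; and (b) this minimum is strictly positive for $\rho\in(0,d)$ and vanishes precisely at $l^1$ when $\rho=d$.

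For (a), set $a:=\cos\theta\sin\varphi=q_1/\rho\in[-1,1]$ and use $\cos^2\theta\sin^2\varphi+\sin^2\theta\sin^2\varphi=\sin^2\varphi\leq 1$ in \eqref{partialbyrho} to obtain
\[
\dfrac{\partial U}{\partial \rho}\ \geq\ \dfrac{\mu}{\rho^2}+(1-\mu)\,h(a)-\rho,\qquad h(a):=\dfrac{\rho-a}{(\rho^2-2\rho a+1)^{3/2}}+a,
\]
with equality iff $\sin\varphi=1$. I would then show $h$ is minimized on $[-1,1]$ at $a=1$: writing $D(a)=\rho^2-2\rho a+1$, a direct differentiation gives $h'(a)=\bigl(2\rho^2-\rho a-1+D^{5/2}\bigr)/D^{5/2}$, whose numerator is strictly decreasing in $a$ and satisfies $N(-1)>0>N(1)$ on $(0,1)$ (the right half reducing to the polynomial inequality $(1-\rho)^5<1+\rho-2\rho^2$). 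Hence $h$ is increase-decrease on $[-1,1]$, so its minimum lies at an endpoint; a direct comparison $h(-1)-h(1)=(1+\rho)^{-2}+(1-\rho)^{-2}-2=2\rho^2(3-\rho^2)/(1-\rho^2)^2>0$ identifies the minimizer as $a=1$, with $\min h=1-(1-\rho)^{-2}$.

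For (b), the resulting lower bound
\[
f(\rho)\ :=\ \dfrac{\mu}{\rho^2}-\dfrac{1-\mu}{(1-\rho)^2}-\rho+(1-\mu)
\]
is exactly $\partial U/\partial q_1$ restricted to the Moon-Earth segment, so the critical-point property $\nabla U(l^1)=0$ gives $f(d)=0$. Since $f'(\rho)=-2\mu/\rho^3-2(1-\mu)/(1-\rho)^3-1<0$, $f$ is strictly decreasing, hence $f(\rho)>0$ on $(0,d)$. The overall bound $\partial U/\partial\rho\geq f(\rho)$ is tight only when $\sin\varphi=1$ and $a=1$, i.e., at $q=(\rho,0,0)$; combined with $f(d)=0$, the only zero of $\partial U/\partial\rho$ in $B$ is $l^1$ itself, and elsewhere in $B\setminus\{M,l^1\}$ we get strict positivity.

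The main obstacle is sub-claim (a). Geometrically the Earth-direction should clearly be the worst case, but formally this requires both the monotonicity of the numerator of $h'$ and a nontrivial endpoint polynomial inequality at $a=1$, and the equality cases must be carefully tracked so that the final zero set of the lower bound collapses onto $\{M,l^1\}$ rather than an entire ray.
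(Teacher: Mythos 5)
Your proof is correct, and all the computations check out: the reduction $a=\cos\theta\sin\varphi$ together with $-\rho\sin^2\varphi\geq-\rho$ is exactly the paper's substitution $\cos\theta'=\cos\theta\sin\varphi$, the formula $h'(a)=(2\rho^2-\rho a-1+D^{5/2})/D^{5/2}$ is right, the endpoint inequalities $N(-1)>0>N(1)$ and $h(-1)>h(1)$ hold for $\rho\in(0,1)$, the identification of the resulting lower bound $f(\rho)$ with $\partial U/\partial q_1$ on the Moon--Earth segment is correct, and $f'<0$ with $f(d)=0$ gives strict positivity away from $\{M,l^1\}$. The difference from the paper is one of self-containedness rather than strategy: after performing the same reduction to a planar-type expression, the paper simply invokes Lemma~5.4 of \cite{AFvKP} for positivity, whereas you prove that planar fact from scratch via the monotonicity analysis of $h$ and the observation that $l^1$ is the unique radial equilibrium of $U$ along the axis. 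What your version buys is a complete, citation-free argument and a precise description of the equality locus (which collapses onto the single point $l^1$ on the boundary sphere $\rho=d$), a point the paper leaves implicit; what it costs is the extra polynomial bookkeeping, in particular the endpoint inequality $(1-\rho)^5<1+\rho-2\rho^2$, which you state but should verify explicitly (it reduces to $(1-\rho)^4<1+2\rho$, which is immediate on $(0,1)$).
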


\begin{proof}
One can always find a number $\theta'$ such that
\begin{equation}\label{thetaprime}
\cos\theta' = \cos\theta\sin\varphi,
\end{equation}
and (\ref{partialbyrho}) is reduced to
$$\dfrac{\partial U}{\partial \rho} = \dfrac{\mu}{\rho^2} + \dfrac{(1-\mu)(\rho-\cos\theta')}{(\rho^2-2\rho\cos\theta' + 1)^{3/2}} + \cos\theta'(1-\mu) - \rho\sin^2\varphi.$$
Applying Lemma~5.4 of \cite{AFvKP}, we get that this term is positive. \hspace*{\fill}
\end{proof}

Note that the second derivative of $U$ with respect to $\rho$ is computed by
\begin{equation}\label{partialbyrho2}
\dfrac{\partial^2 U}{\partial \rho^2} = - \dfrac{2\mu}{\rho^3} - \dfrac{(1-\mu)(2\rho^2-4\rho\cos\theta\sin\varphi+3\cos^2\theta\sin^2\varphi-1)}{(\rho^2-2\rho\cos\theta\sin\varphi+1)^{5/2}}-\sin^2\varphi.
\end{equation}
Now we can apply the equation (\ref{thetaprime}) to (\ref{partialbyrho2}) and then the second derivative simplified as follows
\begin{equation}\label{2thetap}
\dfrac{\partial^2 U}{\partial \rho^2} = - \dfrac{2\mu}{\rho^3} - \dfrac{(1-\mu)(2\rho^2-4\rho\cos\theta'+3\cos^2\theta'-1)}{(\rho^2-2\rho\cos\theta'+1)^{5/2}}-\sin^2\varphi.
\end{equation}

\begin{lemma}\label{negU2}
For every $q \in B- \{M\}$, it holds that $\dfrac{\partial^2 U}{\partial \rho^2} \leq -\sin^2\varphi$.
\end{lemma}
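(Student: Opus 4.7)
The plan is to reduce the claim directly to its planar analogue in \cite{AFvKP}, using the same substitution that already underlies equation~\eqref{2thetap}. First, since $|\cos\theta\sin\varphi|\le 1$, one may pick $\theta'$ with $\cos\theta'=\cos\theta\sin\varphi$; with this substitution \eqref{2thetap} reads
\begin{equation*}
\dfrac{\partial^2 U}{\partial \rho^2} = -\dfrac{2\mu}{\rho^3} - \dfrac{(1-\mu)\bigl(2\rho^2-4\rho\cos\theta'+3\cos^2\theta'-1\bigr)}{(\rho^2-2\rho\cos\theta'+1)^{5/2}} - \sin^2\varphi.
\end{equation*}
The summand $-\sin^2\varphi$ is already isolated, so proving the lemma reduces to showing that the first two summands are non-positive on $B\setminus\{M\}$.

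Next I would observe that these two summands are precisely the $\theta$-dependent part of the planar radial second derivative $\partial_\rho^2 U^{\mathrm{planar}}$, with $\theta$ replaced by $\theta'$. In the planar problem the rotational correction contributes $-1$ instead of $-\sin^2\varphi$ (because $\varphi=\pi/2$), so the planar analogue of the lemma is the inequality $\partial_\rho^2 U^{\mathrm{planar}}\le -1$ established in Section~5 of \cite{AFvKP}, which is exactly the non-positivity of the same two summands for any $(\rho,\theta)$ with $\rho$ in the planar Moon-ball of radius $d$. Since $l^1$ lies on the $q_1$-axis, the spatial and planar values of $d=|M-l^1|$ coincide, so the bound $\rho\le d<1$ implicit in the definition of $B$, together with $\cos\theta'\in[-1,1]$, puts us inside the hypotheses of the planar lemma. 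Invoking it with $\theta=\theta'$ yields the required non-positivity, and \eqref{2thetap} then gives $\partial_\rho^2 U\le -\sin^2\varphi$.

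The main obstacle, such as it is, is notational rather than analytic: one must verify that the substitution $\cos\theta'=\cos\theta\sin\varphi$ absorbs all of the genuinely three-dimensional dependence except for the freestanding $-\sin^2\varphi$, and that the parameter range is unchanged when passing from $\theta$ to $\theta'$. Once this bookkeeping is in place, no new three-dimensional estimate is needed, and the planar Lemma from \cite{AFvKP} finishes the proof.
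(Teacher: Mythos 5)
Your proposal is correct and follows essentially the same route as the paper: after the substitution $\cos\theta'=\cos\theta\sin\varphi$ in \eqref{2thetap}, the paper likewise isolates the $-\sin^2\varphi$ term and cites Lemma~5.5 of \cite{AFvKP} for the non-positivity of the remaining two summands. Your extra bookkeeping about the range of $\theta'$ and the radius $d$ is a harmless elaboration of that same citation.
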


\begin{proof}
Define the function $W$ as
\begin{equation}
W(\rho,\theta') := - \dfrac{2\mu}{\rho^3} - \dfrac{(1-\mu)(2\rho^2-4\rho\cos\theta'+3\cos^2\theta'-1)}{(\rho^2-2\rho\cos\theta'+1)^{5/2}}.\nonumber
\end{equation}
By Lemma~5.5 of \cite{AFvKP}, we know the function $W$ is non-positive. Hence we get $\dfrac{\partial^2 U}{\partial \rho^2} \leq -\sin^2\varphi$, using Equation \eqref{2thetap}. \hspace*{\fill}
\end{proof}

Now we can prove Theorem~\ref{part_mainthm} by using Lemma~\ref{posU1} and Lemma~\ref{negU2}.\\
\noindent$\mathit{Proof}$$\mathit{of}$$\mathit{Theorem}$ $~\ref{part_mainthm}$. We use spherical coordinates and so the Liouville vector field turns to $X = \rho\frac{\partial}{\partial \rho}$. We now compute
\begin{equation}
\begin{split}
X(H) &= \rho\dfrac{\partial U}{\partial \rho} + \rho\sin\theta\sin\varphi(p_1 + \rho\sin\theta\sin\varphi) - \rho\cos\theta\sin\varphi(p_2 - \rho\cos\theta\sin\varphi + 1 - \mu)\\
& \geq \rho \dfrac{\partial U}{\partial \rho} - \rho\sin\varphi\sqrt{(p_1+\rho\sin\theta\sin\varphi)^2 + (p_2 - \rho\cos\theta\sin\varphi + 1 - \mu)^2}\\
& = \rho\dfrac{\partial U}{\partial \rho} - \rho\sin\varphi\sqrt{2(H-U)-p_3^2}\\
& \geq \rho\dfrac{\partial U}{\partial \rho} - \rho\sin\varphi\sqrt{2(H-U)} \nonumber,
\end{split}
\end{equation}
where we use the Cauchy-Schwarz inequality for the second step. To show $X(H) >0$, it is enough to check the following
\begin{equation}\label{lastpos}
\rho\left(\dfrac{\partial U}{\partial \rho} - \sin\varphi\sqrt{2(c-U)}\right)\Big|_{\mathcal{K}^M_c} > 0.
\end{equation}
The following inequality is useful for (\ref{lastpos}):
Since we are assuming in this section that $H(L_1) > c$, we have the following with Theorem \ref{thmU}.
\begin{equation}\label{ab2}
U(d, \varphi, \theta) \geq U(d, \dfrac{\pi}{2}, 0)= U(l^1) = H(L_1) > c .
\end{equation}
Note that $U(\rho, \varphi, \theta) \leq c$ for $(\rho, \varphi, \theta) \in {\mathcal{K}^M_c}$. Hence there exists $\tau \in [0, d- \rho)$ such that $U(\rho+\tau, \varphi, \theta) = c$ and we get the following result by Lemma~\ref{posU1} and Lemma~\ref{negU2}.

\begin{equation}
\begin{split}
\left( \dfrac{\partial U(\rho, \varphi, \theta)}{\partial \rho} \right)^2 &= \left( \dfrac{\partial U(\rho + \tau, \varphi, \theta)}{\partial \rho} \right)^2 - \int^\tau_0 \dfrac{d}{dt} \left( \dfrac{\partial U(\rho + t, \varphi, \theta)}{\partial \rho} \right)^2 dt\\
&> -2 \int^\tau_0 \dfrac{\partial U(\rho + t, \varphi, \theta)}{\partial \rho}\dfrac{\partial^2U(\rho+t,\varphi,\theta)}{\partial \rho^2}dt\\
&\geq 2\int^\tau_0 \sin^2\varphi \dfrac{\partial U(\rho+t,\varphi,\theta)}{\partial \rho} dt\\
&= 2\sin^2\varphi(c - U(\rho,\varphi,\theta)) \nonumber.
\end{split}
\end{equation}
This means that Equation~\eqref{lastpos} holds, so we conclude that Theorem~\ref{part_mainthm} holds.

\begin{remark}
\label{rem:extension_transversality}
In the above we have proved transversality of the Liouville vector field for $c < H(L_1)$. 
We now explain how to extend this transversality result on a subset of $H^{-1}(c)$ for any energy level $c$ slightly above $H(L_1)$.

First, take a small ball $B_\delta(L_1)$ around the Lagrange point $L_1$ with some radius $\delta$. If we choose a sufficiently small $\epsilon$, the set $H^{-1}(c) \setminus B_\delta(L_1)$ is still divided into two components by $B_\delta(L_1)$ for $c < H(L_1) + \epsilon$. By Theorem~$\ref{thmU}$, the Hills region of the Moon $\Sigma^M_{H(L_1)}$ has maximal length along the ray $\varphi = \frac{\pi}{2}, \theta = 0$. The maximal distance is smaller than $d := |M - l^1|$, for a sufficiently small $\delta$. That means the Moon component is contained in $B_{d}(M)$. Therefore we can prove $X$ is transverse to the component containing the Moon in $H^{-1}(c) \setminus C_\delta(L_1)$ by the same argument of the proof of Theorem~\ref{part_mainthm}. In the same way, it holds for the component containing the Earth component. This remark will be used in the next section.
\end{remark}

\section{Connected sum}
\label{sec:connected_sum}
So far, we have seen that $H^{-1}(c)$ is of 
contact type whenever $c<H(L_1)$. Now we will show that if $H(L_1)<c<H(L_1)+\epsilon$ for a sufficiently small $\epsilon$, then $H^{-1}(c)$ is also of contact type. This has already been proved for the planar case in section 7 of \cite{AFvKP}. 
For the spatial case, we will apply the same technique and use the same notation as in \cite{AFvKP}. 
We begin by reviewing some of Conley's work from \cite{4}.
We write the Jacobi Hamiltonian as
$$
H(q,p)=\dfrac{1}{2}((p_1+q_2)^2+(p_2-q_1)^2+p_3^2)+V(q),
$$
where $V(q$) is the effective potential given by
$$
V(q)=-\dfrac{1}{2}(q_1^2+q_2^2)-\dfrac{\mu}{|q-M|}-\dfrac{1-\mu}{|q-E|}.
$$
We consider the Taylor expansion of $V(q)$ at a critical point $q^L$,
\begin{equation}\label{159}
V(q)=\tilde{Q}(q-q^L)+R(q-q^L).
\end{equation}
The term $\tilde{Q}$ represents the quadratic part of $V(q)$ and $R$ is the remainder term, i.e., higher order than 2.
This gives us the second order Taylor approximation of $H$ at the first Lagrange point $L_1 :=(q^L,p^L)=(q^L_1,q^L_2,0,-q^L_2,q^L_1,0)$,
$$
H(q,p)=\dfrac{1}{2}((p_1+q_2)^2+(p_2-q_1)^2+p_3^2)+\tilde{Q}(q-q^L)+R(q-q^L).
$$
We combine the quadratic terms of $H$ into the quadratic form $Q$ and find
$$
H(q,p)=Q(q-q^L,p+(q^L_2,-q^L_1,0))+R(q-q^L).
$$
The quadratic form $Q$ is represented by a $6\times6$ matrix, which by abuse of notation we also write as $Q$.
With respect to the ordered basis $(q_1, q_2, p_1, p_2, q_3, p_3)$, this matrix is given by
$$
Q = \dfrac{1}{2} \begin{pmatrix} -2\rho & 0 & 0 & -1 & 0 & 0  \\ 0 & \rho & 1 & 0 & 0 & 0 \\ 0 & 1 & 1 & 0 & 0 & 0 \\ -1 & 0 & 0 & 1 & 0 & 0 \\  0 & 0 & 0 & 0 & \rho & 0 \\  0 & 0 & 0 & 0 & 0 & 1\end{pmatrix},
$$
here $\rho$ stands for the expression 
$$
\rho =  \dfrac{\mu}{|q^L-M|^3}+\dfrac{1-\mu}{|q^L-E|^3}.
$$

We move the first Lagrange point to $l^1 = (0,0,0,0,0,0)$ and we may write $H$ as
$$
H(q,p)=Q(q,p)+R(q).
$$

We will now construct a Liouville vector field near $l^1$.
Define the vector field $Y_{a,b,\gamma}$ as
$$
Y_{a,b,\gamma} = (q_1,q_2,p_1,p_2,q_3,p_3) \begin{pmatrix} a & 0 & 0 & 0 & 0 & 0  \\ 0 & b & 0 & 0 & 0 & 0 \\ 0 & 0 & 1-a & 0 & 0 & 0 \\ 0 & 0 & 0 & 1-b & 0 & 0 \\  0 & 0 & 0 & 0 & \gamma & 0 \\  0 & 0 & 0 & 0 & 0 & 1-\gamma \end{pmatrix} \begin{pmatrix} \partial_{q_1}\\ \partial_{q_2}\\ \partial_{p_1}\\ \partial_{p_2}\\ \partial_{q_3}\\ \partial_{p_3} \end{pmatrix}.
$$
Since the symplectic form is $\omega = dp\wedge dq$, the Lie derivative $\mathcal{L}_Y\omega$ is given by
\begin{equation}
\begin{split}
\mathcal{L}_Y\omega &= d(-aq_1dp_1+(1-a)p_1dq_1-bq_2dp_2+(1-b)p_2dq_2-\gamma q_3dp_3+(1-\gamma)p_3dq_3)\\
&=\omega.
\end{split}
\nonumber
\end{equation}
Therefore the vector field $Y_{a,b,\gamma}$ is Liouville.
Our next task is to show that $Y_{a,b,\gamma}$ is transverse to level sets $H^{-1}(c)$ on small neighborhoods of the critical point. 

\begin{lemma}\label{poslem}
For $a<0$, $b>0$ and $0<\gamma<1$, the vector field $Y_{a,b,\gamma}$ is transverse to $H^{-1}(c)$ in a sufficiently small open ball near $L_1$, where the energy $c$ is contained in $[H(L_1)-\epsilon,H(L_1)+\epsilon] \setminus \{H(L_1)\}$.
\end{lemma}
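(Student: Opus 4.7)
The plan is to split $Y_{a,b,\gamma}(H) = Y_{a,b,\gamma}(Q) + Y_{a,b,\gamma}(R)$, prove positivity of the quadratic contribution, and then absorb the cubic-and-higher remainder by a smallness argument on $B_\epsilon(L_1)$.

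First I would apply $Y_{a,b,\gamma}$ to $Q$ using the explicit matrix formula for $Q$, obtaining after a direct computation
\begin{equation*}
Y_{a,b,\gamma}(Q) = -2a\rho\, q_1^2 + b\rho\, q_2^2 + (1-a) p_1^2 + (1-b) p_2^2 + (b+1-a) p_1 q_2 - (a+1-b) q_1 p_2 + \gamma\rho\, q_3^2 + (1-\gamma) p_3^2.
\end{equation*}
Crucially this is block-diagonal with respect to the splitting $\mathbb{R}^6 = \mathbb{R}^4_{(q_1,q_2,p_1,p_2)} \oplus \mathbb{R}^2_{(q_3,p_3)}$: the planar block is exactly the quadratic form that appears in the planar analysis, while the new vertical block is $\gamma\rho\, q_3^2 + (1-\gamma) p_3^2$. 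Under the stated sign conditions on $a$ and $b$, the planar block is positive definite by the computation in Section~7 of \cite{AFvKP}, and for $0<\gamma<1$ together with $\rho>0$ the vertical block is manifestly positive definite. Combining the two yields a constant $c_0 > 0$ such that $Y_{a,b,\gamma}(Q) \geq c_0 \|(q,p) - L_1\|^2$ on all of $\mathbb{R}^6$.

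Next I would estimate the remainder. Since $R(q)$ is a function of $q$ alone of order $\geq 3$ at $q^L$, and since the coefficients of $Y_{a,b,\gamma}$ are linear, one has $|Y_{a,b,\gamma}(R)| \leq C\|(q,p)-L_1\|^3$ on any fixed bounded neighbourhood. Choosing $\epsilon$ small enough that $C\epsilon < c_0/2$ then gives $Y_{a,b,\gamma}(H) \geq (c_0/2)\|(q,p)-L_1\|^2 > 0$ on $B_\epsilon(L_1) \setminus \{L_1\}$. Because $c \neq H(L_1)$, the Lagrange point $L_1$ is not contained in $H^{-1}(c)$, so every point of $H^{-1}(c) \cap B_\epsilon(L_1)$ lies in the punctured ball and satisfies $Y_{a,b,\gamma}(H) > 0$, which is the desired transversality.

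The main obstacle is justifying positive definiteness of the $4\times 4$ planar block. Re-grouping it in the basis that pairs $(q_1,p_2)$ and $(q_2,p_1)$ reveals that beyond the obvious diagonal sign conditions one also needs determinantal inequalities along the lines of $-8a\rho(1-b) > (a+1-b)^2$ and $4b\rho(1-a) > (b+1-a)^2$, which tie $a$, $b$ and $\rho$ together in a non-trivial way. Fortunately the same $4\times 4$ matrix, with the same constant $\rho$, already arises in the planar problem and has been verified there, so one may appeal directly to \cite{AFvKP}; the spatial generalisation contributes only the trivially positive vertical block.
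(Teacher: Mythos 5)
Your proposal is correct and follows essentially the same route as the paper: both split $Y_{a,b,\gamma}(H)$ into the quadratic part $Y_{a,b,\gamma}(Q)$ and the higher-order remainder $Y_{a,b,\gamma}(R)$, observe that $Y_{a,b,\gamma}(Q)$ decomposes into the planar $4\times 4$ block already handled in \cite{AFvKP} plus the manifestly positive definite vertical block $\gamma\rho q_3^2+(1-\gamma)p_3^2$, and then absorb the remainder on a sufficiently small ball. Your write-up is in fact slightly more careful than the paper's (explicit formula for $Y_{a,b,\gamma}(Q)$, the determinantal conditions tying $a,b,\rho$ together, and the observation that $c\neq H(L_1)$ excludes the point $L_1$ itself from the level set), but the underlying argument is identical.
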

\begin{proof}
Note that $Y(Q)$ is a quadratic form in $(q,p)$. We also find values $a,b$ and $\gamma$ such that $Y(Q)$ is positive definite.
Split the matrix $Q$ into two block matrices:
\[ 
Q = \dfrac{1}{2} \left( \begin{array}{cccc|cc}
-2\rho & 0 & 0 & -1 &0 &0\\ 
0 & \rho & 1 & 0 & 0 & 0 \\
0 & 1 & 1 & 0 & 0 & 0\\
-1 & 0 & 0 & 1 & 0 & 0\\ \hline
0 & 0 & 0 & 0 & \rho & 0\\
0 & 0 & 0 & 0 & 0 & 1 \end{array} \right)
.
\] 
In this decomposition, the upper block is same as in the planar case and the lower block contributes the term $\gamma\rho q_3^2+(1-\gamma)p_3^2$ to $Y(Q)$. 
Hence we can choose $\gamma$ such that the lower part is positive definite. 
Consequently $Y(Q)$ has positive eigenvalues. On the other hand, the remainder term $Y(R)$ has higher order than $2$. For a sufficiently small neighborhood of the first Lagrange point we can estimate $Y(H)$ by
$$Y(H)=Y(Q+R)=Y(Q)+Y(R) \geq Y(Q)-1/2|Y(Q)|>0.$$ \hspace*{\fill}
\end{proof}

The Liouville form $\alpha_1$ induced by the Liouville vector field $Y$ is given by
$$
\alpha_1=-a(q_1-q^L_1)dp_1-bq_2dp_2-\gamma q_3dp_3+(1-a)p_1dq_1+(1-b)(p_2-q^L_1)dq_2+(1-\gamma)p_3dq_3.
$$
Because of Remark~\ref{rem:extension_transversality}, we already know that for some energy $c \in [H(L_1),H(L_1)+\epsilon]$ the vector field $X=(q-M)\cdot\partial_q$ is Liouville and transverse to the hypersurface $H^{-1}(c)$ outside a small ball $B_{\tilde \delta}(L_1)$. The contact form $\alpha_0$ induced by $X$ is given by
$$
\alpha_0=(M_1-q_1)dp_1-q_2dp_2-q_3dp_3.
$$
The difference of $\alpha_0$ and $\alpha_1$ is written as
\begin{equation}
\begin{split}
\alpha_1-\alpha_0 &=(1-a)((q_1-q^L_1)dp_1+p_1dq_1)+(q^L_1-M_1)dp_1\\
 & \ \ \ \ \ + (1-b)(q_2dp_2+(p_2-q^L_1)dq_2)+(1-\gamma)(q_3dp_3+p_3dq_3).
\end{split}
\nonumber
\end{equation}
Note that this differential form is exact, i.e., $d( \alpha_1 - \alpha_0 ) =d\alpha_1 - d\alpha_0=\omega-\omega=0$, and has a primitive given by
$$G(q,p):=(1-a)(q_1-q^L_1)p_1+(q^L_1-M_1)p_1+(1-b)(p_2-q^L_1)q_2+(1-\gamma)p_3q_3.$$

Now we consider the translation which sends $(q^L_1,q^L_2,0,-q^L_2,q^L_1,0) \mapsto (0,0,0,0,0,0)$ and denote the transformed coordinate by $(q,p)$. Note that $q^L_2 = 0$ since the first Lagrange point is on the axis between the Earth and Moon. We rewrite $G$ as
$$
G=(1-a)q_1p_1+(q^L_1-M_1)p_1+(1-b)p_2q_2+(1-\gamma)p_3q_3.
$$
Denote the two Liouville vector fields by $Z_0$ and $Z_1$ corresponding to $\alpha_0$ and $\alpha_1$, respectively (i.e., $Z_0=X, \  Z_1=Y$). The Hamiltonian vector field $Z_G$ of $G$ is given by
$$
i_{Z_G}\omega=dG
$$
and we have 
$$
Z_1=Z_0+Z_G.
$$
We now construct a Liouville vector field transverse to the hypersurface $H^{-1}(c)$. 
Choose a cut-off function $f$ depending on $q_1+\frac{1}{\rho}p_2$ such that the vector field $Z:=Z_0+Z_{fG}$
\begin{enumerate}
\item equals $Z_0$ for large $q_1$,
\item and equals $Z_1$ for $q_1$ close to $0$.
\end{enumerate}
In other words, $f=1$ near the first Lagrange point and $f=0$ in outside of a small neighborhood of the first Lagrange point. 
We are going to show that the energy hypersurface $H^{-1}(c_1)$ is separated into two connected components by the set $\{q_1+\frac{1}{\rho}p_2=0\}$.

First, we have to check that the singular energy hypersurface $H^{-1}(c_1)$ corresponds to $Q^{-1}(0)$.
If we show that the hyperplane $\{q_1+\frac{1}{\rho}p_2=\delta\}$ intersects the quadric $Q^{-1}(0)$ in 4-sphere when $\delta \neq 0$ and intersects a point when $\delta = 0$, then we can prove that hyperplane $\{ q_1+\frac{1}{\rho}p_2=0 \}$ divides $H^{-1}(c_1)$ into two components.
Rewriting the hyperplane equation by $p_2=\rho(\delta-q_1)$ and putting it into $Q^{-1}(0)$, we get the equation given by
$$\dfrac{1}{2}\Big((p_1+q_2)^2+(p_2-q_1)^2-(2\rho+1)q_1^2+(\rho-1)q_2^2+\rho q_3^2+p_3^2\Big)=0$$
and this equation can be rewritten as
$$(p_1+q_2)^2+(\rho q_1-(\rho+1)\delta)^2+(\rho-1)q_2^2+\rho q_3^2+p_3^2=(2\rho+1)\delta^2.$$
This means that when $\delta=0$, the solution for this equation is just a point. Therefore we have divided the hypersurface into two connected components. We call the component containing the Earth \textit{the Earth component} and the other \textit{the Moon component}.

\indent Now we want to show that the Liouville vector field $Z$ is transverse to level sets $H^{-1}(c)$ where $c \in (H(L_1),H(L_1)+\epsilon)$. Since we have already checked that $Z_0(H)>0$ away from the Lagrange point in Remark~\ref{rem:extension_transversality}, the only remaining thing is interpolating part of the cut-off function $f$. For this, note that $Z(H)$ is given by
\begin{equation}
\begin{split}
Z(H) &= Z_0(H)+Z_{fG}(H)\\
&=dH(Z_0)+\{H,fG\}\\
&= dH(Z_0)-\{fG,H\}\\
&= (1-f)dH(Z_0)+fdH(Z_0)-f\{G,H\}-G\{f,H\}\\
&= (1-f)dH(Z_0)+fdH(Z_0+Z_G)+GdH(Z_f). \nonumber
\end{split}
\end{equation}

It's clear that the first term is non-negative. The second term is also non-negative as $Z_0+Z_G=Z_1$, $dH(Z_1) >0$ by Lemma \ref{poslem}. For the last term, we compute $Z_f$  and $dH (Z_f)$: 
$$
Z_f=f' \cdot \left(\dfrac{\partial}{\partial p_1}-\dfrac{1}{\rho}\dfrac{\partial}{\partial q_2}\right),
$$
$$
dH(Z_f)=f' \cdot \left( 1-\dfrac{1}{\rho}\right)p_1-f' \cdot \dfrac{q_2}{\rho}\left(\dfrac{\mu}{|q-M|^3}+\dfrac{1-\mu}{|q-E|^3}-\rho \right).
$$
Note that the second term of $dH(Z_f)$  vanishes at $q^L$ since $\rho=\frac{\mu}{|q^L-M|^3}+\frac{1-\mu}{|q^L-E|^3}$. In other words, we can estimate $dH(Z_f) \sim f'(1-\frac{1}{\rho})p_1$.

Next we consider the leading order term of $G$, i.e., $(q^L_1-M_1)p_1$, the only 1-degree term of $G$. Observe that the leading order term of $GdH (Z_f)$ is 
$$f'(q^L_1-M_1)\Big(1-\dfrac{1}{\rho}\Big)p^2_1$$
and it is positive in the interpolating region. Notice that the leading order terms in $Z(H)$ are all non-negative and at least one term is positive. Hence, they dominate the higher order term of $Z(H)$. In other words, $Z(H)$ is positive in the interpolating part of the cut-off function $f$. For some more detail, see Section 8.6 of \cite{105}. 

In the next lemma, we extend the above Liouville vector field to the whole hypersurface.

\begin{lemma}\label{compl}
There exists $\epsilon >0$ and a Liouville vector field $\tilde{Z}$ such that $\tilde{Z}$ is transverse to level sets $H^{-1}(E)$ for all $H(L_1)<E <H(L_1)+\epsilon$.
\end{lemma}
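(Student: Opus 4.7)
The plan is to take $\tilde Z := Z = Z_0 + Z_{fG}$ as constructed in the preceding discussion, and then to choose $\epsilon > 0$ small enough that the three-term decomposition
\[
\tilde Z(H) = (1-f)\, dH(Z_0) + f\, dH(Z_1) + G\, dH(Z_f)
\]
is strictly positive on $H^{-1}(E)$ for every $E \in (H(L_1), H(L_1)+\epsilon)$. The Liouville property of $\tilde Z$ is immediate: $Z_0$ satisfies $\mathcal{L}_{Z_0}\omega = \omega$, while $Z_{fG}$ is the Hamiltonian vector field of the smooth function $fG$ and hence preserves $\omega$, so $\mathcal{L}_{\tilde Z}\omega = \omega$ globally.

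For transversality I would organize the argument into three overlapping regions. In a small ball $B_{\epsilon_1}(L_1)$ on which $f \equiv 1$, the first and third terms vanish while the middle term equals $dH(Z_1) > 0$ by Lemma~\ref{poslem}. Outside a slightly larger ball $B_{\epsilon_2}(L_1)$ on which $f \equiv 0$, only the first term survives and is strictly positive by Remark~\ref{rem:extension_transversality}, provided $\epsilon$ is small enough that $Z_0$ remains transverse to $H^{-1}(E)$ throughout the relevant part of the complement of $L_1$. On the interpolation annulus $\epsilon_1 \leq |(q,p)-L_1| \leq \epsilon_2$, both transversality results still apply, so the first two terms are each strictly positive with a uniform positive lower bound that is independent of $E$ once $\epsilon$ is fixed small.

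The main obstacle is controlling the cross-term $G\, dH(Z_f)$ in the interpolation slab. The leading-order computation preceding the lemma shows that its dominant contribution equals $f'(q_1^L - M_1)(1 - \rho^{-1})p_1^2 \geq 0$, so what remains is merely to bound the higher-order corrections. Since $G$ vanishes at $L_1$ and $Z_f$ is supported in the slab $\{|q_1 + \rho^{-1}p_2| \leq \eta\}$, taking the slab width $\eta$ sufficiently small forces $|G|$ to be uniformly small on the support of $f'$; combined with the uniform positive lower bound already secured for the first two terms, this renders $\tilde Z(H) > 0$ on the entire annulus. Finally, we shrink $\epsilon$ further so that $H^{-1}(E)$ lies in the regime where the hyperplane $\{q_1 + \rho^{-1}p_2 = 0\}$ separates $H^{-1}(E)$ into the Earth and Moon components, as verified in the preceding quadric computation; this ensures that the vector field $\tilde Z$ constructed on each connected component is well-defined and transverse, completing the proof.
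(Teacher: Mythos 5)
Your construction covers only half of the hypersurface, and the missing half is precisely the content of this lemma. The radial Liouville field $Z_0=(q-M)\cdot\partial_q$ is established (Theorem~\ref{part_mainthm} and Remark~\ref{rem:extension_transversality}) to be transverse only on the lobe of $H^{-1}(E)$ containing the primary at its center: the argument rests on the monotonicity of $\partial U/\partial\rho$ inside the ball $B_d(M)$, and it fails on the other lobe, where the gradient of the effective potential points toward the other primary rather than radially away from $M$. For $H(L_1)<E<H(L_1)+\epsilon$ the level set $H^{-1}(E)$ is \emph{connected} (its Hill's region is $\mathcal{K}^{M,E}_c$), so a vector field of the form $Z=Z_0+Z_{fG}$ with a single choice of center is transverse only on one of the two pieces into which $\{q_1+\rho^{-1}p_2=0\}$ separates the hypersurface; your phrase ``throughout the relevant part of the complement of $L_1$'' quietly restricts to that piece and the other piece is never addressed.

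The paper's proof fixes this by running the interpolation twice: once with the Earth-centered radial field to get $Z$ on the Earth component, once with the Moon-centered radial field to get $Z'$ on the Moon component, and then observing that both constructions coincide with $Z_1=Y_{a,b,\gamma}$ on a neighborhood of $L_1$ (where the respective cut-off functions equal $1$), so that $Z$ and $Z'$ glue to a single Liouville field $\tilde Z$ on all of $H^{-1}(E)$. Your verification of the Liouville property ($Z_{fG}$ is Hamiltonian, hence $\mathcal{L}_{Z_{fG}}\omega=0$) and your treatment of the cross term $G\,dH(Z_f)$ on the interpolation region are consistent with the discussion preceding the lemma, but to complete the proof you must perform the symmetric construction on the second lobe and exhibit the patching near $L_1$.
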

\begin{proof}
In the above we have constructed a Liouville vector field $Z$ on the Earth component. 
Since the Liouville vector field is equal to $Z_1$ on a small neighborhood of the first Lagrange point, we can use the same method to the Moon component and get a Liouville vector field $Z'$ on the Moon component.
Because of the symmetry of the cutoff function $f$, these two Liouville vector fields patch together to a Liouville vector field $\tilde{Z}$ on the whole hypersurface. \hspace*{\fill}
\end{proof}

\section{Moser regularization of level sets of $H$}
\label{sec:moser_reg}
In this section we apply Moser's work, \cite{8}, to regularize level sets of the Jacobi Hamiltonian for the spatial RTBP and prove that our Liouville vector field extends to a Liouville vector field that is transverse to this regularized hypersurface. This method has been used before for the planar PTBP in \cite{AFvKP}. See Chapter~4 of \cite{105} for an overview of different regularization schemes.

The computations use stereographic projection and involve both vectors in $\R^3$ and in $\R^4$. In order to streamline some formulas, we will write denote vectors in $\R^3$ by $\vec q=(q_1,q_2,q_3)$ and vectors in $\R^4$ by $\xi=(\xi_0,\xi_1,\xi_2,\xi_3)=(\xi_0;\vec \xi)$. This yields more efficient formulas in this section.
We, however, will make an exception for $E$ and $M$, the positions of Earth and Moon. These are also vectors in $\R^3$, but only serve as parameters and writing $\vec{E}$ will only add clutter.

With this new notation, the Jacobi Hamiltonian is given by
$$
H(\vec p,\vec q)=\dfrac{1}{2}|\vec p|^2-\dfrac{\mu}{|\vec q-M|}-\dfrac{1-\mu}{|\vec q-E|}+p_1q_2-p_2q_1.
$$
As pointed out in the introduction, this Hamiltonian has singularities at $\vec q=M$ and $\vec q=E$, both corresponding to two-body collisions.

\subsection{Regularizing the Hamiltonian} 
We consider the hypersurface $\Sigma^M_c$. To construct the regularization, we will use a new time parametrization and a new Hamiltonian. Namely, we put
$$
s=\displaystyle \int\dfrac{dt}{|\vec q-M|}, \quad K=(H-c)|\vec q-M|,
$$
where the number $c$ is the energy level. In $(\vec q,\vec p)$ coordinates, the function $K$ is given by
$$
K(\vec q,\vec p)=\Bigg( \dfrac{1}{2}|\vec p|^2-\dfrac{\mu}{|\vec q-M|}-\dfrac{1-\mu}{|\vec q-E|}+p_1q_2-p_2q_1-c \Bigg)|\vec q-M|.
$$
We apply the canonical transformation given by $\vec p=-\vec x,\ \vec y=\vec q-M$ to the function $K$ to exchange the role of position and momentum; the transformed Hamiltonian $\tilde{K}$ is given by
\begin{equation}
\begin{split}
\tilde{K}(\vec x,\vec y)&=\dfrac{1}{2}|\vec x|^2|\vec y|-\mu-\dfrac{(1-\mu)|\vec y|}{|\vec y+M-E|}-x_1y_2|y|+x_2(y_1+M_1)|\vec y|-c|\vec y|\\
&=\dfrac{1}{2}(|\vec x|^2+1)|\vec y|-\mu-(1-\mu)\dfrac{|\vec y|}{|\vec y+M-E|}+(x_2y_1-x_1y_2)|\vec y|+x_2 M_1|\vec y|-(c+\dfrac{1}{2})|\vec y|\nonumber.
\end{split}
\end{equation}
We now extend $\tilde K$ to a Hamiltonian on $T^*S^3$ using a symplectic transformation that is induced by the stereographic projection.
This symplectic transformation is given by
\begin{equation}
\label{eq:stereo_symplectic}
\begin{split}
\vec x &=\dfrac{\vec \xi}{1-\xi_0},  \\
\vec y &= \vec \eta (1-\xi_0)+\vec \xi \eta_0,
\end{split}
\end{equation}
where $(\vec x,\vec y)$ represents a point in $T^* \mathbb{R}^3$ and $(\xi,\eta)$ is a point in $T^* S^3\subset T^*\R^4$. 
We have the following formulas for the inverse and for a useful relation,
\begin{equation}
\begin{split}
\xi_0 &= \dfrac{|\vec x|^2-1}{|\vec x|^2+1}, \ \ \ \vec \xi=\dfrac{2\vec x}{|\vec x|^2+1},\\
\eta_0 &= \langle \vec x,\vec y\rangle, \ \ \ \vec \eta=\dfrac{|\vec x|^2+1}{2}\vec y-\langle \vec x,\vec y \rangle \vec x,\\
|\eta|&= \dfrac{(|\vec x|^2+1)|\vec y|}{2} = \dfrac{|\vec y|}{1-\xi_0}.
\end{split}
\end{equation}
By plugging in these formulas into $\tilde K$, we obtain the transformed Hamiltonian $F$ on $T^*S^3$:
$$
F(\xi,\eta)=|\eta|\Bigg(1-\dfrac{(1-\mu)(1-\xi_0)}{|(1-\xi_0)\vec{\eta}+\eta_0\vec{\xi}+M-E|} +(1-\xi_0)(\xi_2\eta_1-\xi_1\eta_2)+\xi_2 M_1-(c+\dfrac{1}{2})(1-\xi_0) \Bigg)-\mu.
$$
For convenience, let us define a function $f(\xi,\eta)$ by
$$f(\xi,\eta)=1-\dfrac{(1-\mu)(1-\xi_0)}{|(1-\xi_0)\vec{\eta}+\eta_0\vec{\xi}+M-E|} +(1-\xi_0)(\xi_2\eta_1-\xi_1\eta_2)+\xi_2 M_1-(c+\dfrac{1}{2})(1-\xi_0)$$
and consider the new Hamiltonian $Q(\xi,\eta)$ given by
$$
Q(\xi,\eta)=\dfrac{1}{2}|\eta|^2 f(\xi,\eta)^2.
$$
Note that the level set  $H^{-1}(c) = K^{-1}(0)$ is compactified to the level set $Q^{-1}(\frac{1}{2}\mu^2)$.
As the Hamiltonian $Q$ is smooth near this level set with the same (up to time parametrization) dynamics, we view $Q^{-1}(\frac{1}{2}\mu^2)$ as the regularized problem.

\subsection{Transversality near the Moon}
The goal of this subsection is to show that the level set $Q^{-1}(\frac{1}{2}\mu^2)$ is of contact type. In other words, we check that the Liouville vector field $X=\eta \cdot \partial_\eta$ is transverse to the level set $Q^{-1}(\frac{1}{2}\mu^2)$ near the Moon, i.e., over points $(\xi,\eta)$ satisfying $|\eta|(1-\xi_0)<\epsilon$. Of course there are two connected components but we only consider the Moon component. For $\mu > 0$, $X(Q)$ is given by
\begin{equation}
\begin{split} \nonumber
X(Q)&=|\eta|^2f(\xi,\eta)^2+|\eta|^2f(\xi,\eta)\eta\cdot \partial_\eta f(\xi,\eta) \\
&=2Q+|\eta|^2f(\xi,\eta)\Bigg(-\eta\cdot\partial_\eta\dfrac{(1-\mu)(1-\xi_0)}{|(1-\xi_0)\vec{\eta}+\eta_0\vec{\xi}+M-E|}+(1-\xi_0)(\xi_2\eta_1-\xi_1\eta_2) \Bigg).
\end{split}
\end{equation}
It is clear that the first term is positive. The strategy of estimating the second term is to obtain a lower bound on $|f(\xi,\eta)|$ and an upper bound on $|\eta|$ from the level set condition $Q^{-1}(\frac{1}{2}\mu^2)$. We will bound $|f(\xi,\eta)|$ by following inequality 
\begin{equation}
\begin{split}
|f(\xi,\eta)| &= \Bigg|1-\dfrac{(1-\mu)(1-\xi_0)}{|(1-\xi_0)\vec{\eta}+\eta_0\vec{\xi}+M-E|} +(1-\xi_0)(\xi_2\eta_1-\xi_1\eta_2)+\xi_2 M_1-(c+\dfrac{1}{2})(1-\xi_0)\Bigg|\\
&\geq \Bigg|1-\dfrac{(1-\mu)(1-\xi_0)}{|(1-\xi_0)\vec{\eta}+\eta_0\vec{\xi}+M-E|}-(c+\dfrac{1}{2})(1-\xi_0)\Bigg|-\Bigg|(1-\xi_0)(\xi_2\eta_1-\xi_1\eta_2)+\xi_2M_1\Bigg|\\
& \geq 1+(1-\xi_0)\Bigg(|c|-\dfrac{1}{2}-\dfrac{1-\mu}{|(1-\xi_0)\vec{\eta}+\eta_0\vec{\xi} + M - E|}\Bigg)-(1-\xi_0)|\eta||\xi|-|\xi||M| ,
\end{split}
\nonumber
\end{equation}
where we have assumed that $c$ is negative.
We are making this assumption, since we are concerned with level sets $H^{-1}(c)$ with $c<H(L_1)+\epsilon<0$.
Because $|M|=1-\mu,|\xi|=1$ and $(1-\xi_0)|\eta|<\epsilon$, we find that
\begin{equation}
\begin{split}
|f(\xi,\eta)| &\geq 1 -\epsilon-(1-\mu) +(1-\xi_0)\Bigg(|c|-\dfrac{1}{2}-\dfrac{1-\mu}{|(1-\xi_0)\vec{\eta}+\eta_0\vec{\xi} + M - E|}\Bigg)\\
&\geq \dfrac{\mu}{2}.
\end{split}
\nonumber
\end{equation}
The last inequality follows from the fact that the first three terms give $\mu-\epsilon$ and the number $(1-\xi_0)$ is always non-negative. 
We can simplify the last term as
\begin{equation}\label{1a}
(1-\xi_0)\Bigg(|c|-\dfrac{1}{2}-\dfrac{1-\mu}{|q-E|}\Bigg).
\end{equation}
By the triangle inequality 
$$
|q-M|+|q-E|\geq |M-E|=1,
$$
we get
$$
|\vec q-E|\geq 1-\epsilon.
$$
Note that for every $\mu \in [0,1]$ we know that $H(L_1) \leq -\frac{3}{2}$. Hence we get a lower bound on the term \eqref{1a} by
\begin{equation} \label{rhs}
(1-\xi_0)\Bigg(|c|-\dfrac{1}{2}-\dfrac{1-\mu}{|\vec q-E|}\Bigg) \geq (1-\xi_0)\Big(\dfrac{3}{2}-\dfrac{1}{2}-\dfrac{1-\mu}{1-\epsilon}\Big).
\end{equation}
The right hand side of \eqref{rhs} approaches $(1-\xi_0)\mu>0$ as $\epsilon$ approaches $0$.
Furthermore, it is also increasing as $\epsilon$ tends to $0$.
Hence it follows that there exists a number $\epsilon'$ such that if $\epsilon < \epsilon'$, then the right hand side of \eqref{rhs} is bigger than $0$. Thus we get
$$
|f(\xi,\eta)| \geq \mu-\epsilon -0 \geq \dfrac{\mu}{2}.$$
In particular, we can now bound the range of $\eta$,
$$
\dfrac{1}{2}\mu^2=Q(\vec q,\vec p)=\dfrac{1}{2}|\eta|^2 |f(\xi,\eta)|^2 \geq \dfrac{1}{2}|\eta|^2 \dfrac{\mu^2}{4},
$$
$$
\text{i.e.,} \ |\eta| \leq 2.
$$
Let us show that $X(Q)$ is positive. Observe that
$$
X(Q) \geq 2Q-|\eta|^2|f(\xi,\eta)| \Bigg| -\eta\cdot\partial_\eta\dfrac{(1-\mu)(1-\xi_0)}{|\vec{\eta}(1-\xi_0)+\vec{\xi}\eta_0+M-E|}+(1-\xi_0)(\xi_2\eta_1-\xi_1\eta_2) \Bigg|.
$$
Note that $|\eta| \leq 2$, $|\eta||f(\xi,\eta)|= \sqrt{2Q} = \mu$. 
By the triangle inequality, we get
$$
X(Q) \geq 2Q-2\mu\Bigg(|\eta| \Bigg|\partial_{\eta} \dfrac{(1-\mu)(1-\xi_0)}{|\vec{\eta}(1-\xi_0)+\vec{\xi}\eta_0+M-E|}\Bigg| +|(1-\xi_0)(\xi_2\eta_1-\xi_1\eta_2)|\Bigg).
$$
The term $\partial_\eta f(\xi,\eta)$ is smooth on the region $|\vec q-M| \leq \epsilon$, since this region is away from the singularity at $E$. Hence we can bound this term by $C$, and we continue to estimate $X(Q)$ as follows
$$
X(Q) \geq \mu^2-2\mu\epsilon(1+(1-\mu)C).
$$
By choosing $\epsilon$ sufficiently small, we see that $X(Q)=\eta\cdot\partial_\eta Q$ is positive.

Finally, the symplectic map~\eqref{eq:stereo_symplectic} composed with the switch map sends the Liouville vector field $\vec q\cdot\partial_{\vec q}$ to the Liouville vector field $\eta \cdot \partial_\eta$. Thus we have obtained a Liouville vector field defined near the whole regularized level set and showed transversality using the computations from the previous sections.
As a result, the regularized hypersurface is of contact type and diffeomorphic to $ST^*S^3$ when the energy is below the first critical value. If the energy is slightly above the first critical value, then the regularized energy hypersurface is also of contact type and it is diffeomorphic to the connected sum $ST^*S^3\# ST^*S^3$.
This concludes the proof of Theorem~\ref{thm:firstthm}.

\section{Blue sky catastrophes}
\label{sec:blue_sky}
In this section we will investigate some dynamical consequences of the contact-type condition; we will see that no blue sky catastrophes can occur in the range of energy-parameters where Theorem~\ref{thm:firstthm} applies.

Let us start by explaining what a blue sky catastrophe is.
Simply put, if we are given a smooth $1$-parameter family of periodic orbits $\{ \gamma_s \}_{s\in[0,s_0)}$, then we say that this family undergoes a blue sky catastrophe at $s=s_0$ if both the period and length go to infinity as $s\to s_0$.

Since we will consider the Reeb vector field, which is non-vanishing, on a compact manifold, we will define a blue sky catastrophe with these additional assumptions as follows.
\begin{definition}
A smooth $1$-parameter family of non-vanishing vector fields $\{ X_{s} \}_{s\in [0,1]}$ on a compact manifold has a blue sky catastrophe at $s=1$ if there is a smooth $1$-parameter family of periodic orbits $\gamma_s$ with $s\in [0,1)$ such that the period of $\gamma_s$ goes to infinity as the parameter $s$ goes to 1.
\end{definition}
The extra assumptions imply that the length also goes to infinity in this case.
A Hamiltonian example of such a period and length blow-up in a $1$-parameter family occurs for instance in the transition from the geodesic flow to the horocycle flow on the unit cotangent bundle of a higher genus surface. See the introduction of \cite{BFvK} for a more detailed discussion of this example.\\

For the following theorem, we need some assumptions and notations. Let $(M,\omega=d\lambda)$ be an exact symplectic manifold and $H$ be a Hamiltonian function in $ C^{\infty} ( M \times [0,1], \mathbb{R})$. Denote $H_r = H( \cdot, r) \in C^{\infty} ( M , \mathbb{R})$. With this notation, we can treat $H$ as a one parameter family of autonomous Hamiltonian functions $H_r$ for $r \in [0,1]$. 

Assume that for every $r \in [0,1]$, $0$ is a regular value of $H_r$  and the level set $H^{-1}_r(0)$ is connected. Moreover, $H^{-1}(0)$ is supposed to be compact such that $\{H^{-1}_r(0)\}_{s\in [0,1]}$ is a smooth one parameter family of closed, connected submanifolds of $M$. We have the Hamiltonian vector field of $H_r$ for $r \in [0,1]$, denoted by $X_{H_r}$, satisfying the condition
$$dH_r = \omega(\cdot, X_{H_r}).$$

\begin{theorem}
\label{thm:no_blue-sky}
Suppose that $(\gamma_r,\tau_r)$ for $r\in[0,1)$ is a smooth family of periodic Reeb orbits $\gamma_r$ with period $\tau_r$.
If there is a one parameter family of contact form $\lambda|_{H_r^{-1}(0)}$ with $r\in[0,1]$,  then there exists $\tau_1 \in (0,\infty)$ such that $\tau_r$ converges to $\tau_1$.
\end{theorem}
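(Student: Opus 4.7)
The plan is to derive a linear first-order ODE for $\tau_r$ with a uniformly bounded coefficient on $[0,1)$, then combine a Gronwall estimate with Lipschitz continuity to extract a finite positive limit. The starting point is the tautology $\tau_r = \int_{\gamma_r}\lambda$, valid because $\lambda(R_r)\equiv 1$ along the Reeb parametrization; this turns the period blow-up question into a question about how an action varies with the parameter.

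Concretely, parametrize $\gamma_r$ by its Reeb time and form the smooth map $\Phi\colon[0,1)\times S^1\to M$, $\Phi(r,s)=\gamma_r(s\tau_r)$, noting that $\partial_s\Phi=\tau_r R_r(\Phi)$. Applying Stokes' theorem to the cylinder $\Phi([r_0,r]\times S^1)$ together with $d\lambda=\omega$ yields
$$\frac{d\tau_r}{dr} \;=\; \tau_r\int_0^1 \omega\bigl(\partial_r\Phi, R_r\bigr)\,ds.$$
On the contact-type hypersurface $H_r^{-1}(0)$, both $X_{H_r}$ and $R_r$ span the one-dimensional characteristic line, so $X_{H_r}=h_r R_r$ with $h_r:=\lambda(X_{H_r})$; moreover $h_r$ is nowhere zero, since otherwise $X_{H_r}$ would lie in the contact hyperplane distribution, which is transverse to the characteristic direction. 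Combining $\omega(v,X_{H_r})=-dH_r(v)$ with the identity $0=\tfrac{d}{dr}H(\Phi(r,s),r)$ produces the scalar ODE
$$\frac{d\tau_r}{dr} \;=\; \tau_r\,a(r), \qquad a(r):=\int_0^1 \frac{\partial_r H(\Phi(r,s),r)}{h_r(\Phi(r,s))}\,ds.$$

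Since $H^{-1}(0)\subset M\times[0,1]$ is compact by hypothesis, the continuous functions $\partial_r H$ and $h$ satisfy $|\partial_r H|\le C_1$ and $|h|\ge c_0>0$ on this set. Hence $|a(r)|\le A:=C_1/c_0$ uniformly on $[0,1)$, and Gronwall's inequality gives the two-sided bound $\tau_0 e^{-A}\le \tau_r\le \tau_0 e^{A}$. In particular $|d\tau_r/dr|\le A\tau_0 e^A$ is uniformly bounded, so $\tau_r$ is Lipschitz on $[0,1)$ and extends continuously to $r=1$; the lower bound then forces $\tau_1:=\lim_{r\to 1}\tau_r$ to lie in $(0,\infty)$.

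The only real issue is establishing the uniform lower bound $|h|\ge c_0$ on $H^{-1}(0)$, which requires both the contact-type condition (to rule out pointwise vanishing of $\lambda(X_{H_r})$) and the compactness of $H^{-1}(0)$ in $M\times[0,1]$ (to upgrade pointwise non-vanishing to a uniform bound). Everything else is an application of Stokes' theorem and a scalar Gronwall estimate.
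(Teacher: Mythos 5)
Your proposal is correct and follows essentially the same route as the paper: the identity $\frac{d\tau_r}{dr}=\tau_r\,a(r)$ with $a(r)$ an average of $\partial_r H$ over the orbit (which you derive via Stokes' theorem on the cylinder swept by the family, and the paper derives as the first variation of the Rabinowitz action functional at a critical point, after normalizing $H$ by the factor $f_r$ so that $X_{H_r}=R_r$ rather than carrying $h_r$ explicitly as you do), followed by a compactness bound and a Gronwall/Lipschitz argument. The two presentations are the same computation in different clothing, and your explicit remark that the uniform Lipschitz bound — not mere boundedness — is what yields convergence at $r=1$ is a point the paper leaves implicit.
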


We will follow the idea in chapter 7 of \cite{105} to prove this theorem. Before proving Theorem \ref{thm:no_blue-sky}, we want to make a remark: consider the Rabinowitz action functional on the manifold $M$,
$$
\mathcal{A}^H(\gamma,\tau)=\displaystyle \int_{S^1}\gamma^*\lambda-\tau\displaystyle \int H(\gamma(t))dt,
$$
where $\gamma \in C^{\infty}(S^1,M)$ and $\tau \in (0,\infty)$. If $(\gamma,\tau)$ is a critical point of $\mathcal{A}^H$, then 
$$
\partial_t\gamma=\tau X_H(\gamma) \text{\ \ \ and\ \ \ } H(\gamma)=0.
$$
In other words, a critical point $( \gamma, \tau)$ of $
\mathcal{A}^H$ is a periodic orbit of $X_H$ with period $\tau$.

\begin{proof}
Note that the Hamiltonian flow is given by a reparametrization of the Reeb flow of the contact form $\lambda|_{H^{-1}_r(0)}$ and their images are same. Indeed, the Reeb vector field $R_r$ of $\lambda|_{H^{-1}_r(0)}$ is parallel to the Hamiltonian vector field $X_{H_r}|_{H^{-1}_r(0)}$. 
Observe that if two vector fields are parallel each other, then there exist smooth functions $f_r : {H^{-1}_r(0)} \rightarrow \mathbb{R}$ such that 
$$R_r = f_r X_{H_r} \vert_{H^{-1}_r(0)}.$$
By the compactness of $H^{-1}(0)$, there exists $c >0 $ such that
$$\frac{1}{c} \leq \vert f_r(x)\vert \leq c\ \ \ \ \ \text{for} \ \ r  \in [0,1],\ x \in H^{-1}_r(0). $$
Thus we can get a smooth extension of $f_r$ such that
 $\bar{f} : M \times [0,1] \rightarrow \mathbb{R}\text{\textbackslash} \{0\}$ and
$$\bar{f} ( \cdot, r) \vert_{H^{-1}_r(0)} = f_r.$$
By replacing $H$ by $\bar{f} \cdot H$, we may assume that
\begin{equation}\label{R_r}
R_r=X_{H_r}|_{H^{-1}_r(0)}.
\end{equation}
In the same way, the original family of periodic orbits $\gamma_r$ gets reparametrized. However, one can notice that the reparametrization of flow does not affect to convergence of period because of the compactness of $H^{-1}(0)$.\\ 

Consider the family of functionals for $r \in [0,1]$
$$\mathcal{A}^{H_r} : C^\infty(S^1,M)\times (0,\infty) \rightarrow \mathbb{R}.$$
We now compute the action of $\mathcal{A}^{H_r}$ at the critical point $(\gamma, \tau)$ with (\ref{R_r}) as follows
$$
\mathcal{A}^{H_r}(\gamma,\tau) = \displaystyle \int^1_0 \lambda (\tau X_{H_r}(\gamma))dt= \tau\displaystyle \int^1_0 \lambda(R_r)dt = \tau.
$$
Notice that the period $\tau$ of the periodic orbit $\gamma$ is the action value of $\mathcal{A}^{H_r}$. 
Let $(\gamma_r, \tau_r)$ for ${r \in [0,1)}$ be a smooth family of periodic orbits. Differentiating $\tau_r$ with respect to the $r$-parameter, we get 
\begin{equation}\label{partial_r}
\begin{split}
\partial_r \tau_r &= \dfrac{d}{dr}(\mathcal{A}^{H_r}(\gamma_r,\tau_r))\\
&=(\partial_r \mathcal{A}^{H_r})(\gamma_r,\tau_r)\\
&= -\tau_r\displaystyle \int_{S^1}(\partial_r H_r)(\gamma_r)dt.
\end{split}
\end{equation}
Note that we use the fact that $(\gamma_r, \tau_r)$ is a critical point of $\mathcal{A}^{H_r}$ for the second equation. 
By the fact that $H^{-1}(0)$ is compact, there exists $k>0$ such that
\begin{equation}\label{Kappa}
\Big| \partial_r H_r|_{H^{-1}_r(0)} \Big| <k\ \ \ \ \ \ \text{for all}\ \ r \in [0,1].
\end{equation}
Using (\ref{partial_r}) and (\ref{Kappa}) we have the estimate
$$| \partial_r \tau_r |  < k \tau_r.$$
Hence, if $0 \leq r_1 < r_2 <1$, then
$$e^{-k(r_2-r_1)}\tau_{r_1} < \tau_{r_2} < e^{k(r_2-r_1)}\tau_{r_1}.$$
This completes that $\tau_r$ converges, when $r$ goes to $1$.
\hspace*{\fill}
\end{proof}

We are now in position to get the conclusion of this section.

\begin{corollary}
Suppose that $\gamma_s$ is a smooth family of periodic orbits on the regularized energy hypersurface $\overline{\Sigma}^E_c$, $\overline{\Sigma}^M_c$ or $\overline{\Sigma}^{E,M}_c$ of the Hamilton vector field $X_{H_{\mu(s),c(s)}}$ for some smooth family $s\mapsto \mu(s),c(s)$.
Assume that $H_{\mu(s),c(s)}(\gamma_s)\neq H_{\mu(s),c(s)}(L_1(\mu(s) ) )$ and that $H_{\mu(s),c(s)}(\gamma_s) < H_{\mu(s),c(s)}(L_1(\mu(s) ) ) +\epsilon(\mu(s))$, where $\epsilon(\mu(s))$ is the small parameter from Theorem~\ref{thm:firstthm}. Then $\gamma_s$ has no blue sky catastrophe.
\end{corollary}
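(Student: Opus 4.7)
The plan is to derive the corollary directly from Theorem~\ref{thm:no_blue-sky}, applied to the family of Moser-regularized Hamiltonians of Section~\ref{sec:moser_reg}. After reparametrizing so $s\in[0,1]$ with the putative catastrophe at $s=1$, set $H_s := Q_{\mu(s),c(s)} - \tfrac{1}{2}\mu(s)^2$, where $Q_{\mu,c}$ is the regularized Hamiltonian on the ambient exact symplectic manifold $T^*S^3$ (or the connected-sum model, in the $\overline{\Sigma}^{E,M}$ case). The assumption $H_{\mu(s),c(s)}(\gamma_s)\neq H_{\mu(s),c(s)}(L_1(\mu(s)))$ means $c(s)-H(L_1(\mu(s)))$ is nowhere zero, so by continuity it has constant sign: the topological type of the regularized hypersurface is fixed along the family and $\{H_s^{-1}(0)\}_{s\in[0,1]}$ is a smooth family of closed connected submanifolds, matching the hypothesis of Theorem~\ref{thm:no_blue-sky}.

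I would then produce a smoothly varying family of Liouville primitives $\lambda_s$ whose restrictions to $H_s^{-1}(0)$ are contact forms. In the sub-critical regime $c(s)<H(L_1(\mu(s)))$ the canonical Liouville form of $T^*S^3$ works and is $s$-independent; transversality to $H_s^{-1}(0)$ follows from Theorem~\ref{part_mainthm} together with the Moser computation at the end of Section~\ref{sec:moser_reg}. In the slightly super-critical regime, the Liouville field $\widetilde{Z}$ constructed in Lemma~\ref{compl} by interpolating the radial field of Section~\ref{sec:transversality} with the local field $Y_{a,b,\gamma}$ via a cutoff near $L_1(\mu)$ can be arranged to depend smoothly on $(\mu,c)$: the Lagrange point, the coefficients $a,b,\gamma$, and the cutoff function $f$ can all be chosen smoothly in $\mu$, and pushing the field through the family of Moser symplectomorphisms~\eqref{eq:stereo_symplectic} yields the desired primitive. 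Uniform transversality on the compact parameter interval follows from the estimates of Sections~\ref{sec:transversality}, \ref{sec:connected_sum} and~\ref{sec:moser_reg}.

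Theorem~\ref{thm:no_blue-sky} then yields a finite positive limit for the periods of the reparametrized Reeb orbits of $\lambda_s|_{H_s^{-1}(0)}$ associated to $\gamma_s$. Since on the compact level set the factor converting Hamiltonian time to Reeb time is bounded away from $0$ and $\infty$, the original periods of $\gamma_s$ remain bounded as $s\to 1$, and a blue sky catastrophe is impossible. The main obstacle I anticipate is confirming smoothness of $\lambda_s$ in the super-critical regime, since $\widetilde{Z}$ is assembled from several non-canonical pieces; the hypothesis prevents $c(s)$ from ever reaching $H(L_1(\mu(s)))$, so these auxiliary data can be chosen smoothly and locally uniformly in $s$, and standard partition-of-unity patching makes the resulting primitive globally smooth, at which point the corollary reduces cleanly to Theorem~\ref{thm:no_blue-sky}.
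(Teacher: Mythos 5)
Your proposal follows essentially the same route as the paper: the corollary is deduced by combining Theorem~\ref{thm:firstthm} (the hypersurfaces in the family are all of contact type) with Theorem~\ref{thm:no_blue-sky}. The paper's own proof is only two sentences long, so your additional verifications --- constancy of the sign of $c(s)-H(L_1(\mu(s)))$, smoothness of the family of contact forms through the regularization and the connected-sum construction, and the bounded reparametrization between Hamiltonian and Reeb time --- are correct elaborations of details the paper leaves implicit rather than a different argument.
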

By Theorem~\ref{thm:firstthm} we know that such the hypersurfaces in such a family are all of contact-type. Hence Theorem~\ref{thm:no_blue-sky} implies that there is no blue sky catastrophe.

\end{document}